\newcommand\la{\langle}
\newcommand\ra{\rangle}
\newcommand\hh{{\mathfrak h}}
\newcommand\nn{{\mathfrak n}}
\newcommand\vv{{\mathfrak v}}
\newcommand\ww{{\mathfrak w}}
\newcommand\zz{{\mathfrak z}}
\newcommand\iso{{\mathfrak{iso}}}
\newcommand\RR{\mathbb R}
\newcommand\so{{\mathrm{so}}}
\newcommand\ad{\operatorname{ad}}
\newcommand\Iso{\operatorname{Iso}}
\theoremstyle{plain}
\newtheorem{thm}{Theorem}[section]
\newtheorem{lem}[thm]{Lemma}
\newtheorem{prop}[thm]{Proposition}
\theoremstyle{definition}
\newtheorem{defn}[thm]{Definition}
\newtheorem{rem}[thm]{Remark}
\newtheorem{example}[thm]{Example}
\begin{document}

\title[Magnetic trajectories on 2-step nilmanifolds]
{Magnetic trajectories on 2-step nilmanifolds}

\author{Gabriela P. Ovando, Mauro Subils}

\thanks{{\it (2000) Mathematics Subject Classification}: 70G45, 22E25, 53C99, 70G65 }

\thanks{{\it Key words and phrases}: Magnetic trajectories, 2-step nilmanifolds, Heisenberg Lie groups.
 }

\thanks{Partially supported by  SCyT (UNR)}

\address{ Departamento de Matem\'atica, ECEN - FCEIA, Universidad Nacional de Rosario.   Pellegrini 250, 2000 Rosario, Santa Fe, Argentina.}

\

\email{gabriela@fceia.unr.edu.ar}

\email{subils@fceia.unr.edu.ar}


\begin{abstract}  The aim of this work is the study of magnetic trajectories on nilmanifolds. The magnetic equation is written and the corresponding solutions are found for a family of invariant Lorentz forces on a 2-step nilpotent Lie group equipped with a left-invariant metric.  Some examples are computed in the Heisenberg Lie groups $H_n$ for $n=3,5$, showing  differences with the case of exact forms. Interesting  magnetic trajectories related to elliptic integrals appear in $H_3$. The question of existence of closed or periodic magnetic trajectories for every energy level on Lie groups or  on compact quotients is treated. 
\end{abstract}

\maketitle

 \noindent\section{Introduction}
From the mechanical perspective, geodesics describe the motion
of particles that are not experiencing any force.  In the presence of a force, known as {\it  Lorentz force}, the  behavior of a charged particle is described by an equation of the form:
\begin{equation}\label{mageq}
	\nabla_{\gamma'}{\gamma'}= q F\gamma'
\end{equation}
where $\gamma$ is a curve on a Riemannian manifold $(M, g)$,  $\nabla$ is the corresponding Levi-Civita connection and $F$ is a skew-symmetric $(1,1)$-tensor such that  the corresponding 2-form is closed. This example is taken from electromagnetism theory, and different examples arise associated to other geometries,  for instance the  potential gauge
on a  principal circle bundle $P(M^n, \mathbb S^1)$, the  K\"ahler (uniform) magnetic field on a 
K\"ahler manifold (see for instance \cite{Ad}), or the contact magnetic field on a Sasakian manifold \cite{CF}. The main purpose is to solve the equation  and to study the underlying geometries. From the dynamical perspective,  many authors consider the  topological entropy
or the Anosov property of magnetic flows (see for instance \cite{BP}).

The  magnetic trajectories are quite different from geodesics. In spaces of dimension two \cite{Co,Su} where the magnetic field is given by $q dA$, being $dA$ the area element, the magnetic trajectories follow: on the Euclidean plane $\RR^2$ they are circles, on the sphere $S^2$ they are small circles, and on the hyperbolic plane  the trajectories are either closed when $|q|\geq 1$, or open in the rest. These results were extended in different directions. For example, trajectories corresponding to magnetic fields defined as scalar multiples of the K\"ahler form on a K\"ahler manifold were studied in \cite{Ad1,Ad2}, on an almost K\"ahler manifold in  \cite{EI}. Different kind of spaces were considered, such as the sphere \cite{Sc}, the torus \cite{Ta}, and those related to Lie groups and their actions, for instance in \cite{BJ1, BJ2}.

For 2-step nilmanifolds, the topic  was recently considered by Epstein, Gornet and Mast in \cite{EGM}. They studied periodic magnetic geodesics on Heisenberg manifolds,  obtaining  an  analysis of left-invariant exact magnetic flows. They search  closed trajectories  and the corresponding  energy level where they occur, and they determine the Ma\~n\'e critical value. Indeed there are previous and important works describing closed geodesics on 2-step nilmanifolds such as \cite{dC, dM,LP,Ma}. 

In this paper we concentrate the attention to 2-step nilpotent Lie groups equipped with a left-invariant metric and their associated compact quotients. 
We have two main   purposes:
 \begin{enumerate}[(i)]
	\item  to describe the solutions of the magnetic equation; 
	\item to determine closeness conditions of magnetic trajectories on compact quotients. 
\end{enumerate} 
Indeed  the first goal ask for solving a differential equation, the second one connects with dynamical questions. 

After the Euclidean spaces, the 2-step nilpotent Lie groups, have a simple algebraic structure but develop a very interesting geometry  to study. They  are the counterpart of 2-step Lie algebras, that satisfy $[[U,V], W]=0$ for all $U,V,W$ in the Lie algebra, namely $\nn$. When equipped with a metric, such Lie algebra $\nn$  admits an orthogonal decomposition 
$$\nn=\vv \oplus \zz, \quad \mbox{ where } \vv=\zz^{\perp}, \quad (*)$$
and $\zz$ denotes the center of the Lie algebra. This decomposition is deeply related to the geometry of the corresponding 2-step nilpotent Lie group $(N, g)$ whenever $g$ is a left-invariant metric \cite{Eb}. 

We consider a Lorentz force  $F$ on  $(N,g)$ which is invariant by translations on the left. The first step is to write clearly the magnetic equation in this situation in terms of the decomposition (*). This is done by making use of the exponential map. 

At the corresponding Lie algebra level, there is a magnetic equation for curves on $\nn$. We also obtain a description of the magnetic trajectories for Lorentz forces corresponding to exact forms (Theorem 3.3),   by identifying them  in the set of all differentiable curves on $\nn$.

For the case of left-invariant Lorentz forces on the Lie group $(N, g)$ preserving the decomposition (*), that is $F\zz \subset \zz $ and $F\vv \subset \vv$, we obtain the explicit formula for the magnetic trajectories (Theorem \ref{magnetictrayF0}).  



In another work, the authors study left-invariant 2-forms on 2-step nilpotent nilmanifolds obtaining the next obstruction: 
{\it If the 2-step nilpotent Lie algebra $(\nn, \la\,,\, \ra)$ is non-singular and its dimension satisfies: $\dim \nn > 3 \dim \zz$, then any skew-symmetric map $F:\nn \to \nn$ giving rise to a magnetic field,  preserves the decomposition (*).} 

\smallskip

This means that Theorem \ref{magnetictrayF0} gives all possible  magnetic trajectories in Lie groups satisfying these two conditions. 







Next we work out some examples. In the Heisenberg Lie group of dimension 
three, we study Lorentz forces that interchange the subspaces $\vv$ and $\zz$. In this situation we prove that solutions of the magnetic equation  are related to elliptic integrals. Indeed, those solutions are very different from the ones obtained in Theorem \ref{magnetictrayF0}, see also Example 3.9.  But according to  Greenhill in  \cite{Gr},  elliptic integrals were applied in electromagnetism theory in the 19th century, a fact known by Legendre and other mathematicians at that time. 

On the induced compact nilmanifolds $\Gamma \backslash H_3$, we show conditions for periodicity of magnetic trajectories. The key is the following result obtained for 2-step nilpotent Lie groups

\smallskip

Lemma \ref{lambdaper}: {\em Let $\lambda=\exp(W_1+Z_1)$ be any element in the 2-step nilpotent Lie group $(N, \la\,,\,\ra)$. If a left-invariant Lorentz force $F$ of type II admits a $\lambda$-periodic trajectory then $W_1\in Ker F$.}

\smallskip

Finally, we consider a left-invariant Lorentz force in the Heisenberg Lie group of dimension five that preserves the decomposition $\hh_5=\vv \oplus \zz$. We study the existence of periodic magnetic trajectories and we find out differences to the results in \cite{EGM}. In our case, there exist closed magnetic geodesics for every energy level, whenever the magnetic field is non-exact. 


The paper is organized as follows: in the first section we recall the basic notions of 2-step nilpotent (real) Lie groups equipped with a left-invariant metric. 
 In the next section we study the magnetic equation and properties of solutions. We find  solutions for any invariant Lorentz force preserving the decomposition \ref{decomp2}. 
The final section shows the explicit examples we mentioned above: magnetic trajectories related to elliptic integrals on $H_3$ and the question of periodicity on compact quotients $\Lambda\backslash H_3$. Later, magnetic trajectories on the Heisenberg Lie group $H_5$ are analysed.

\section{Lie groups of step two with a left-invariant metric}\label{general}

A Lie group is called 2-step nilpotent if its Lie algebra is 2-step nilpotent, that is, the Lie bracket satisfies $[[U,V], W]=0$ for all $U,V,W\in \nn$. Throughout this paper Lie groups, so as their Lie algebras are considered over $\RR$. 

Whenever $N$ is simply connected, the Lie group  $N$ is diffeomorphic to $\RR^n$ and  the exponential map is a diffeomorphism so that  the multiplication map on $N$ obeys the following relation
$$\exp (V) \exp(W)= \exp (V +W+\frac12 [V,W]), \quad \mbox{ for all } U,V, W \in \nn.$$


\begin{example} \label{exa1} The smallest dimensional 2-step nilpotent Lie group is the Heisenberg Lie group $H_3$. It has dimension three and its Lie algebra is spanned by vectors $e_1, e_2, e_3$ satisfying the non-trivial Lie bracket relation
	$$[e_1,e_2]=e_3.$$
	The Lie group $H_3$ can be modeled on $\mathbb R^3$ equipped with the product operation given by
	$$(v_1,z_1)(v_2,z_2)=(v_1+v_2, z_1+z_2+\frac{1}2 v_1^tJv_2),$$
	where $v_i=(x_i,y_i)$, i=1,2 and $J:\RR^2 \to \RR^2$ is the linear map $J(x,y)=(y, -x)$. By using this, usual computations show that a basis of left-invariant vector fields is given at $p=(x,y,z)$ by
	$$e_1(p)=\partial_x -\frac12 y \partial_z, \quad e_2(p)=\partial_y+\frac12 x \partial_z, \quad e_3(p)=\partial_z.$$
Another presentation of the Heisenberg Lie group is given by $3\times3$-triangular real  matrices with 1's on the diagonal with the usual multiplication of matrices.  
	\end{example} 

A Riemannian  metric $\la\,,\,\ra$ on the Lie group  $N$ is called {\it left-invariant} if  translations on the left by elements of the group are isometries. Thus, a left-invariant metric  is determined at the Lie algebra level $\nn$, usually identified with the tangent space at the identity element $T_eN$. The metric on the Lie algebra, also denoted $\la\,,\,\ra$, determines an orthogonal decomposition as vector spaces on the Lie algebra: 
\begin{equation}\label{decomp2}
	\nn=\vv \oplus \zz, \quad \mbox{ where }\quad \vv =\zz^{\perp}
\end{equation}
and $\zz$ denotes the center of $\nn$. The subspaces $\vv$ and $\zz$ induce left-invariant distributions on $N$, denoted by $\mathcal V$ and $\mathcal Z$.  

The decomposition in Equation \eqref{decomp2} induces the skew-symmetric maps $j(Z):\vv \to \vv$, for every $Z\in \zz$,  implicitly defined by 
\begin{equation}\label{j}
	\la Z, [V,W]\ra =\la j(Z) V, W \ra \qquad \mbox{ for all } Z\in \zz, V, W\in \vv. 
\end{equation}

Note that $j:\zz \to \mathfrak{so}(\vv)$ is a linear map. Let $C(\nn)$ denote the commutator of the Lie algebra $\nn$. One has the splitting 
$$\zz=C(\nn)\oplus \ker(j)$$ as orthogonal direct sum of vector spaces. In fact, 
\begin{itemize}
	\item since $\la Z, [U,V]\ra=0$ for all $U,V\in \vv$ and $Z\in \ker(j)$,  then $\ker(j)\perp C(\nn)$.
	\item $\dim \zz = \dim \ker(j)+ \dim C(\nn)$. 
	\item The restriction $j:C(\nn) \quad \mapsto \quad \mathfrak{so}(\vv)\quad \mbox{is injective}.$
\end{itemize} 
In fact, assume $j(Z)=j(\bar{Z})$ for $Z, \bar{Z}\in C(\nn)$. Then $j(Z-\bar{Z})=0$, so that $Z-\bar{Z}\in \ker(j)\cap C(\nn)$. Thus $Z-\bar{Z}=0$. 

See the proof of the next result in Proposition 2.7 in \cite{Eb}. 
 
 \begin{prop} \cite{Eb} Let $(N, \la,,\,\ra)$ denote a 2-step nilpotent Lie group with a left-invariant metric. Then 
 	\begin{itemize}
 		\item the subspaces $\ker j$ and $C(\nn)$ are commuting ideals in $\nn$. 
 		\item Let $E=\exp(\ker(j))$. Then $E$ is the Euclidean de Rham factor of $N$ and $N$ is isometric to the Riemannian product of the totally geodesic submanifolds $E$ and $\bar{N}$ where $\bar{N}=\exp(\vv \oplus C(\nn))$. 
 	\end{itemize}
 	
 	\end{prop}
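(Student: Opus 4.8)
The plan is to carry everything through the orthogonal decomposition $\zz=C(\nn)\oplus\ker j$ established above. First I would observe that, since $\ker j$ and $C(\nn)$ both lie in the center $\zz$, each is an ideal of $\nn$ and the two commute (with all of $\nn$, hence with one another): this is the first assertion. Next, $\bar\nn:=\vv\oplus C(\nn)$ is also an ideal, because $[\nn,\nn]=C(\nn)\subset\bar\nn$; and since $\ker j\perp C(\nn)$ and $\ker j\subset\zz=\vv^{\perp}$, the decomposition $\nn=\bar\nn\oplus\ker j$ is an \emph{orthogonal} direct sum of ideals.

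Exponentiating, $N$ is the direct product of the Lie groups $\bar N=\exp(\bar\nn)$ and $E=\exp(\ker j)$, and because the left-invariant metric on $\nn$ is the orthogonal sum of its restrictions to the two ideals, the left-invariant metric on $N$ is precisely the Riemannian product metric on $\bar N\times E$. In particular both factors are totally geodesic, and --- $\ker j\subset\zz$ being abelian --- $E$ is a simply connected abelian Lie group with an invariant metric, hence flat and isometric to a Euclidean space. It then remains only to show that $\bar N$ has no nontrivial flat de Rham factor; by the uniqueness part of the de Rham decomposition theorem this makes $E$ the full Euclidean factor of $N$.

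For that last point I would use the Ricci tensor of a $2$-step nilpotent group (see \cite{Eb}). The algebraic key is that $\bigcap_{Z\in C(\nn)}\ker j(Z)=\{0\}$: a $V\in\vv$ killed by every $j(Z)$ ($Z\in C(\nn)$) satisfies $\la [V,W],Z\ra=\la j(Z)V,W\ra=0$ for all $W\in\vv$ and $Z\in C(\nn)$, whence $[V,W]=0$ for all $W\in\vv$ (since $[V,W]\in C(\nn)$), so $V$ is central and $V\in\vv\cap\zz=\{0\}$. In particular $C(\nn)$ is the center of $\bar\nn$, so with respect to $\bar\nn=\vv\oplus C(\nn)$ the curvature formulas of \cite{Eb} give $\operatorname{Ric}(V,Z)=0$ for $V\in\vv$, $Z\in C(\nn)$, $\operatorname{Ric}(Z,Z)=\tfrac14\|j(Z)\|^{2}$ for $Z\in C(\nn)$, and $\operatorname{Ric}(V,V)=-\tfrac12\sum_{\alpha}\|j(Z_\alpha)V\|^{2}$ for $V\in\vv$, with $\{Z_\alpha\}$ an orthonormal basis of $C(\nn)$. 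Since $j\colon C(\nn)\to\mathfrak{so}(\vv)$ is injective, $\operatorname{Ric}$ is positive definite on $C(\nn)$; by the triviality just shown, $\operatorname{Ric}$ is negative definite on $\vv$; hence $\operatorname{Ric}_{\bar N}$ is nondegenerate. But if $\bar N$ (which is simply connected and complete) had a nontrivial flat de Rham factor it would be a Riemannian product $\RR^{k}\times\bar N'$ with $k\ge 1$, forcing the $\RR^{k}$-directions into the kernel of $\operatorname{Ric}_{\bar N}$, a contradiction. Therefore $E=\exp(\ker j)$ is the Euclidean de Rham factor of $N$.

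The algebra and the passage to a Riemannian product are routine; the one substantive step --- the part I expect to demand the most care --- is the nondegeneracy of $\operatorname{Ric}_{\bar N}$, which rests on the injectivity of $j$ on $C(\nn)$ together with the triviality of the common kernel $\bigcap_{Z}\ker j(Z)$. (One could instead sidestep curvature and argue, via the Koszul formula, that the left-invariant parallel vector fields on $N$ are exactly those tangent to $\ker j$ and that $\bar N$ carries no nonzero parallel field; but verifying the latter again reduces to the same nondegeneracy.)
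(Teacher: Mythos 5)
Your argument is correct. Note, however, that the paper does not actually prove this proposition: it establishes only the preliminary algebra (the orthogonal splitting $\zz=C(\nn)\oplus\ker(j)$ and the injectivity of $j$ on $C(\nn)$) and then defers the statement itself to Proposition~2.7 of Eberlein's paper \cite{Eb}. So the relevant comparison is with Eberlein's proof rather than with anything in this text. Your route --- splitting $\nn$ as the orthogonal sum of the ideals $\ker(j)$ and $\bar\nn=\vv\oplus C(\nn)$, passing to the Riemannian product $E\times\bar N$, and then excluding a further flat factor of $\bar N$ by nondegeneracy of the Ricci tensor --- is sound at every step: the key algebraic inputs (injectivity of $j|_{C(\nn)}$, which the paper proves, and triviality of $\bigcap_{Z\in C(\nn)}\ker j(Z)$, which you prove correctly) are exactly what make $\operatorname{Ric}$ positive definite on $C(\nn)$ and negative definite on $\vv$, and the Ricci formulas you quote are Eberlein's. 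Eberlein's own argument for the ``full Euclidean factor'' assertion is closer to the alternative you mention parenthetically: using the covariant derivative formulas (ii) of Section~2, a left-invariant $Z\in\ker(j)$ satisfies $\nabla Z\equiv 0$, so $\ker(j)$ spans a parallel flat distribution, and one identifies the Euclidean de Rham factor of the simply connected homogeneous space $N$ with the span of its parallel vector fields; both that identification and your Ricci argument ultimately reduce to the same nondegeneracy statement, so the two proofs are of essentially equal depth, with yours being slightly more self-contained once the curvature formulas are granted. One small point worth making explicit in your write-up: simple connectedness of $N$ is being used both for $\exp$ to be a global diffeomorphism (so that $N\cong\bar N\times E$ as groups) and for the global de Rham decomposition, and it is implicit rather than stated in the proposition.
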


 \begin{example}
 	Let $\hh_3$ denote the Heisenberg Lie algebra of dimension three with basis $e_1, e_2, e_3$ as in Example \ref{exa1}. Take the metric so that  this basis is orthonormal. It is not hard to see that the center is the subspace
 	 $\zz=span\{e_3\}$, while its orthogonal complement is  the subspace $\vv=span\{e_1, e_2\}$ and moreover the map $j:\zz\to \so(\vv)$ is generated by 
 	$$j(e_3)=\left( \begin{matrix}
 		0 & -1\\
 		1 & 0 
 	\end{matrix}\right), 
 $$
 in the basis $e_1, e_2$ of $\vv$. 
 \end{example}

Let $\nabla$ denote the Levi-Civita connection on the 2-step Lie group  $(N, \la\,,\,\ra)$. Since the metric is invariant by left-translations, for $X, Y$ left-invariant vector fields one has the following formula for the covariant derivative:
$$\nabla_X Y = \frac12 \{[X,Y]- \ad(X)^*(Y)-\ad(Y)^*(X)\}$$
where $\ad(X)^*, \ad(Y)^* $ denote the adjoints of $\ad(X), \ad(Y)$ respectively.  Thus, one obtains
$$\left\{ 
\begin{array}{lll}
	(i) & \nabla_Z \widetilde{Z}=0 & \mbox{ for all } Z, \widetilde{Z}\in \zz,\\
	(ii) & \nabla_Z X= \nabla_X Z = -\frac12 j(Z) X & \mbox{ for all } Z\in \zz, X\in \vv,\\
	(iii) & \nabla_X \widetilde{X}=\frac12 [X,\widetilde{X}] & \mbox{ for all } X, \widetilde{X}\in \vv.
\end{array}
\right.
$$

Furthermore the isometry group of the nilpotent Lie group $(N, \la\,,\,\ra)$ is the semidirect product
$$\Iso(N)=H \ltimes N, \qquad\mbox{ where }{\begin{array}{ll}
		N &\mbox{is the nilradical of } \Iso(N) \mbox{ and } \\
		H  & \mbox{is the  group of orthogonal automorphisms,}
\end{array}}$$
result that was proved in 1963 by Wolf  \cite{Wo} (see also \cite{Wi}). The action of the subgroup $H$ on the ideal $N$ is given by $f \cdot L_n = L_{f(n)}$ for every left translation $L_n$ with $n\in N$, and every $f\in H$.

Whenever $N$ is simply connected, the set of  orthogonal automorphisms on $H$ is in correspondence with the set of orthogonal authomorphisms of $\nn$. 
Thus, the  Lie algebra of the isometry group, denoted by $\iso(\nn)$, is the direct sum as vector spaces $\iso(\nn)=\hh \ltimes \nn$, where $\hh$ is the Lie  subalgebra of $H$ and $\nn$ an ideal being the Lie algebra of $N$. The  Lie subalgebra $\hh$  consists of skew-symmetric derivations,
$$\hh=\{ D : \nn \to \nn \, : \, D[X, Y]=[DX, Y] + [X, DY] \mbox{ and } \la DX, Y\ra + \la X, DY\ra =0\}.$$

\begin{example}\label{ortAutHeis} In the case of the Heisenberg Lie algebra equipped with its canonical metric as above, it is easy to prove that any skew-symmetric derivation has a matrix in the basis $e_1, e_2, e_3$ of the form
	$$\left( \begin{matrix}
		0 & -b & 0\\
		b & 0 & 0\\
		0 & 0 & 0
	\end{matrix}\right), \qquad \quad b\in \RR.
	$$
	Thus the isometry Lie algebra $\iso(\hh_3)=\RR \oplus \hh_3$ where any  $b \in \RR$ corresponds to a derivation as above. Furthermore, the Lie algebra $\iso(\hh_3)$ is isomorphic to the oscillator Lie algebra of dimension four.  The isometry group of the Heisenberg Lie group $(H_3, \la\,,\,\ra)$ is  $\Iso(H_3)=\mathrm O(2)\ltimes H_3$, where the action is explicitly given by
	$$(A, (\tilde{V},\tilde{Z}))\cdot (V,Z)= (A(\tilde{V}+V), det(A)(\tilde{Z}+Z+\frac12 [\tilde{V}, V])), \ \  \mbox{ for }A\in \mathrm O(2), \tilde{V}, V\in \vv, \tilde{Z}, Z\in \zz.$$
	\end{example} 
	  
\begin{defn}
A  2 -step nilpotent real Lie algebra $\nn$ with center $\zz$ is called {\em non-singular}  if  $\ad(X): \mathfrak{n} \rightarrow \mathfrak{z}$ is onto for any $X \notin \mathfrak{z}$ \cite{Eb}. The corresponding 2-step nilpotent Lie group will be called non-singular. 

\end{defn}

See the next examples of non-singular Lie algebras. 

\begin{example} {\it Heisenberg Lie algebras.} \label{ExHeis} Let $n\geq 1$ be any integer and let $X_1,Y_1, X_2, Y_2,$ $ \hdots, X_n, Y_n$ be any basis of a real vector space $\vv$ isomorphic to $\RR^{2n}$. Let $Z$ be an element generating a one dimensional space $\zz$. Define a Lie bracket by $[X_i, Y_i]=-[Y_i, X_i]=Z$ and the other Lie brackets by zero. The Lie algebra $\hh_{2n+1}=\vv \oplus \zz$ is the $(2n+1)$-dimensional Heisenberg Lie algebra.   	
\end{example}

\begin{example} {\it Quaternionic  Heisenberg  Lie algebras.} Let $n\geq 1$ be any integer. For each integer $1\leq i \leq n$,  let  $\mathbb H^i$ be a  four dimensional real vector spaces with basis $X_i,Y_i, V_i, W_i$. Let $\zz$ denote a three dimensional real vector space with basis $Z_1, Z_2, Z_3$. Consider the vector space direct sum $\nn=\vv \oplus \zz$, where $\vv=\bigoplus_i \mathbb H^i$. Define a Lie bracket on the Lie algebra $\nn$, $[\,,\,]$, that is $\RR$-bilinear and skew-symmetric with non-trivial relations as follows:
	
	\smallskip
	
	$[Z, \xi]=0$ for all $Z\in \zz, \xi\in \nn$, 
	$$	
	\begin{array}{llrl} 
		[X_i,Y_i]=Z_1 & [X_i,V_i]=Z_2 & [X_i, W_i]=Z_3 & \mbox{ for } 	1\leq i \leq n, \\
		{ [V_i, W_i]=Z_1} & [Y_i, W_i]=-Z_2 & [Y_i, V_i]=Z_3 & \mbox{ for } 	1\leq i \leq n. 	
	\end{array}
	$$
	The resulting Lie algebra is called the quaternionic Heisenberg Lie algebra of dimension $4n+3$. 
\end{example}

Once the 2-step nilpotent Lie algebra $\nn$ is equipped  with a metric, one has the corresponding maps $j(Z):\vv \to \vv$ defined in Equation \eqref{j}. The non-singularity property is equivalent to the condition that  any map $j(Z):\vv\to\vv$ is non-singular for every $Z\in\zz$. And this condition is independent of the metric.

 Non-singular  Lie algebras are also known as {\em fat} algebras because they are the symbol of fat distributions (see \cite{M}). 

More generally, a 2-step nilpotent Lie algebra $\nn$ is said
\begin{itemize}
	\item {\em almost non-singular} if there are elements $Z, \widetilde{Z}\in \zz$ such that $j(Z)$ is non-singular but $j(\widetilde{Z})$ is singular. 
	\item  {\em singular} if any map $j(Z)$ is singular for every $Z\in \zz$.
\end{itemize} 

A family of non-singular Lie algebras is provided by H-type Lie algebras, which are defined as follows. 

Let $(\nn, \la\,,\,\ra)$ denote a 2-step nilpotent Lie algebra equipped with a metric. If the map $j(Z):\vv\to\vv$ is orthogonal for every $Z\in\zz$ with $\la Z, Z \ra =1$,  then the Lie algebra $\nn$ is a {\em Lie algebra  of type H  } \cite{K} (also known as $H$-type Lie algebras). Equivalently, the 2-step nilpotent Lie algebra $\nn$ is of {\em type H } if and only if   $$j(Z)^2=-\la Z, Z\ra Id, \qquad \mbox{ for every }Z\in\zz, $$
which is equivalent to 
$j(Z) j(\widetilde{Z})+ j(\widetilde{Z})j(Z) =-2\la Z, \widetilde{Z}\ra Id$, for $Z, \widetilde{Z}\in \zz$. 
By making use of this identity one can also prove that 
$$[X,j(Z)X]=\la X, X\ra Z$$
for every $X\in\vv$ and $Z\in\zz$.
	The real, complex and quaternionic Heisenberg algebras are examples of Lie algebras of type H.

\section{Trajectories for left-invariant magnetic fields}
In this section we write explicitly  the magnetic equation associated to a left-invariant Lorentz force on a 2-step nilpotent Lie group endowed with a left-invariant metric. We derive the solution for invariant Lorentz forces preserving the decomposition in Equation \eqref{decomp2}.

A {\em  Lorentz force} on a Riemannian manifold $(M,g)$ is a $(1,1)$-tensor $F:TM \to TM$,  which is skew-symmetric and such that the associated 2-form, $\omega_F$, is closed:
$$\omega_F(U,V)=g(FU,V), \quad \mbox{ for all }\quad  U,V\in \chi(M).$$
The closeness condition will impose restrictions to the 2-form as we will see in Section \ref{closedforms}.

On any Riemannian manifold $(M,g)$ with Levi-Civita connection $\nabla$, a solution curve  of the magnetic equation in \eqref{mageq}, namely $\gamma$,  has constant velocity. 
This follows easily from the next computation
$$\frac{d}{dt}g(\gamma'(t), \gamma'(t))=2g(\nabla_{\gamma'(t)}\gamma'(t), \gamma'(t))=2g(F\gamma'(t), \gamma'(t))=0.$$
However a reparametrization could not be a solution. In fact, assume $g(\gamma'(t), \gamma'(t))=E\neq 0$ and take $\tau(t)=\gamma(t/E)$, then $\tau'(t)=1/E\gamma'(t/E)$ so that one has 
$\nabla_{\tau'(t)}\tau'(t)= 1/E^2 F\gamma'(t/E)$. On the other side $F \tau'(t)=1/E F \gamma'(t/E).$

Now, let $\psi:M \to M$ denote an isometry and let $F$ denote a Lorentz force on the Riemannian manifold $M$. If $F$ commutes with the differential $d\psi$, then $\psi\circ \gamma$ is a magnetic trajectory whenever $\gamma$ it is.  In fact
$$ d\psi\nabla_{\gamma'}{\gamma'} = \nabla_{d \psi \gamma'}{d \psi \gamma'}=F \circ  d\psi \gamma'=d\psi \circ  F\gamma'.$$

Moreover take a non trivial number $r\in \RR$ and  let $\gamma$ denote a magnetic solution for the Lorentz force $F$. Write $\sigma(t)=\gamma(rt)$. Then $\sigma$ is magnetic solution for the Lorentz force $rF$:
$$\nabla_{\sigma'(t)}\sigma'(t)=r^2\nabla_{\gamma'(rt)}\gamma'(rt)=r^2F\gamma'(rt)=rF\sigma'(t).$$
Analogous computations prove the next lemma. 
\begin{lem} \label{lem2} Let $(M,g)$ denote a Riemannian manifold with Levi-Civita connection $\nabla$. Let $F$ denote a Lorentz force  on $M$. 
	\begin{enumerate}[(i)]
		\item If $\psi:M \to M$ is an isometry that preserves $F$: $d\psi \circ F = F \circ d\psi$, then $\psi \circ \gamma$ is a magnetic trajectory (for $F$).
		\item If $\gamma$ is a magnetic trajectory for the Lorentz force $F$, then $(\psi \circ\gamma)(rt)$ is a magnetic trajectory for the Lorentz force $d\psi \circ rF \circ d\psi^{-1}$, for  any  $r\in \RR$ and for any isometry $\psi:M \to M$.
	\end{enumerate}
\end{lem}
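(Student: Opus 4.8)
The plan is to assemble the two displayed identities that immediately precede the statement into a single argument, treating the isometry and the reparametrization separately and then combining them.

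First I would isolate the one fact about isometries that drives everything: if $\psi:M\to M$ is an isometry, then $d\psi$ intertwines the Levi-Civita connection along curves, i.e.\ for any curve $\gamma$ one has $d\psi\bigl(\nabla_{\gamma'}\gamma'\bigr)=\nabla_{(\psi\circ\gamma)'}(\psi\circ\gamma)'$. This is just the naturality of $\nabla$ under isometries (equivalently, uniqueness of the Levi-Civita connection applied to the pulled-back metric), and it is also what underlies the already-displayed chain $d\psi\,\nabla_{\gamma'}\gamma'=\nabla_{d\psi\gamma'}d\psi\gamma'$. Granting it, part (i) is immediate: if $\gamma$ solves $\nabla_{\gamma'}\gamma'=F\gamma'$ and $d\psi\circ F=F\circ d\psi$, then
$$\nabla_{(\psi\circ\gamma)'}(\psi\circ\gamma)'=d\psi\,\nabla_{\gamma'}\gamma'=d\psi\,F\gamma'=F\,d\psi\gamma'=F\,(\psi\circ\gamma)',$$
so $\psi\circ\gamma$ is again a magnetic trajectory for $F$.

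For part (ii) I would first check that $\widetilde F:=d\psi\circ F\circ d\psi^{-1}$ is genuinely a Lorentz force: it is skew-symmetric because $\psi$ is an isometry and $F$ is skew-symmetric, and its associated $2$-form is $(\psi^{-1})^{*}\omega_F$, which is closed since $\omega_F$ is; multiplication by $r$ preserves both properties. Then I combine the isometry step with the reparametrization step. Setting $\eta=\psi\circ\gamma$, the same computation as in (i), but now \emph{without} the commutation hypothesis, gives $\nabla_{\eta'}\eta'=d\psi\,F\gamma'=d\psi\,F\,d\psi^{-1}\eta'=\widetilde F\,\eta'$, so $\eta$ is magnetic for $\widetilde F$. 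Finally put $c(t)=\eta(rt)$; by the chain rule $c'(t)=r\,\eta'(rt)$, hence
$$\nabla_{c'(t)}c'(t)=r^{2}\,\nabla_{\eta'}\eta'(rt)=r^{2}\,\widetilde F\,\eta'(rt)=r\,\widetilde F\,c'(t)=\bigl(d\psi\circ rF\circ d\psi^{-1}\bigr)\,c'(t),$$
which is the assertion.

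There is essentially no hard step here; this is exactly why the text calls the computations ``analogous'' to the two displays above. The only points that deserve a word of care are the naturality identity for $\nabla$ under an isometry (cite it or derive it from uniqueness of the Levi-Civita connection) and the remark that conjugating and rescaling a Lorentz force again produces a Lorentz force. Everything else is the chain rule and linearity.
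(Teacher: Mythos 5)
Your proposal is correct and follows essentially the same route as the paper, which simply combines the two displayed computations preceding the lemma (naturality of $\nabla$ under an isometry commuting with $F$, and the chain-rule rescaling argument) and declares the rest ``analogous.'' Your additional check that $d\psi\circ rF\circ d\psi^{-1}$ is again a Lorentz force is a worthwhile precision the paper leaves implicit.
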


Let $(N,\la\,,\,\ra)$ denote a Lie group  equipped with a left-invariant metric  and let $\nabla$ be the corresponding Levi-Civita connection for the metric. Let $F$ be a skew-symmetric endomorphism on $TN$ which is left-invariant, that is $F\circ dL_p=dL_p \circ F$. In this situation the map $F$ is determined by its values on $T_eN\equiv \nn$. 
Assume that $F$ is a {\em left-invariant Lorentz force}, that is, it gives rise to a left-invariant closed 2-form. This means, in particular, that the map $F$ commutes with (the differential of) any translation on the left. Thus, for every magnetic trajectory $\gamma$ we have $\gamma'(t)=dL_{\gamma(t)}x(t)$ where  $x(t)$ is a curve at the Lie algebra. So,  the magnetic equation for $\gamma$  follows $$\nabla_{\gamma'(t)}\gamma'(t)= dL_{\gamma(t)} \nabla_{x(t)}x(t)=F \circ dL_{\gamma(t)} x(t)= dL_{\gamma(t)} \circ F x(t),  $$
where we denote also by $\nabla$ the $\RR$-bilinear map on $\nn$ determined by the Levi-Civita connection, sometimes called the Levi-Civita connection on the Lie algebra $\nn$.  From the equation above it is clear that magnetic trajectories are determined at the identity, furthermore, 
by curves $x:I \to \nn$ satisfying the equation

\smallskip

$x'(t)=\ad^*(x(t))(x(t)) + q F x(t),$

\smallskip

where $q\in \RR-\{0\}$ and  $\ad^*(x)$ denotes the adjoint of $\ad(x):\nn \to \nn$ with respect to the metric. This equation is achieved as one derives the Euler equation for a geodesic on a Lie group equipped with a left-invariant metric. Clearly,  for $F\equiv 0$ one obtains the Euler equation. 
\begin{lem} Let $(N, \la\,,\,\ra)$ denote a Lie group equipped with a left-invariant metric and left-invariant Lorentz force $F$. 
	The magnetic equation at the Lie algebra level is given by:
	\begin{equation}\label{magnetic-eq}
		x'(t)=\ad^*(x(t))(x(t)) + q F x(t), \quad \ad^*(x)\mbox{is the adjoint of $\ad(x)$ w.r.t. }\la\,,\,\ra.
	\end{equation}
	
\end{lem}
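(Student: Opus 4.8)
The plan is to derive Equation~\eqref{magnetic-eq} directly from the magnetic equation~\eqref{mageq} by pulling everything back to the identity via left translation, exactly as one does for the geodesic (Euler) equation on a Lie group. Since $F$ is left-invariant, $F\circ dL_p = dL_p\circ F$ for all $p\in N$, and the left-invariant metric makes each $L_p$ an isometry, so the Levi-Civita connection is $L_p$-equivariant. Given a magnetic trajectory $\gamma:I\to N$, define $x(t):=dL_{\gamma(t)^{-1}}\gamma'(t)\in T_eN\equiv\nn$; this is the velocity seen in the moving frame. The displayed computation preceding the lemma already shows $\nabla_{\gamma'(t)}\gamma'(t)=dL_{\gamma(t)}\bigl(\nabla_{x(t)}x(t)\bigr)$ and $F\gamma'(t)=dL_{\gamma(t)}\bigl(Fx(t)\bigr)$, so~\eqref{mageq} is equivalent to $\nabla_{x(t)}x(t)=qFx(t)$ at the Lie algebra level.

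The remaining task is to identify the Lie-algebra term $\nabla_{x(t)}x(t)$. Here $\nabla$ denotes the $\RR$-bilinear map on $\nn$ induced by the Levi-Civita connection together with the correction that comes from differentiating a non-constant curve in the frame; concretely, for a curve $x(t)$ one has $\nabla_{x(t)}x(t)=x'(t)-\tfrac12[x(t),x(t)]+\ad^*(x(t))(x(t))$, and the bracket term vanishes by antisymmetry. More transparently: the left-invariant-frame covariant derivative of a time-dependent vector is $x'(t)$ plus the intrinsic connection term $\nabla^{\nn}_{x(t)}x(t)$, where $\nabla^{\nn}_X Y=\tfrac12\{[X,Y]-\ad(X)^*Y-\ad(Y)^*X\}$ from the general Koszul formula recalled in Section~\ref{general}; evaluating on $X=Y=x(t)$ kills the bracket and leaves $-\ad^*(x(t))(x(t))$. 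Hence $\nabla_{x(t)}x(t)=x'(t)-\ad^*(x(t))(x(t))$, and substituting into $\nabla_{x(t)}x(t)=qFx(t)$ yields exactly
$$x'(t)=\ad^*(x(t))(x(t))+qFx(t).$$
Setting $F\equiv 0$ recovers the classical Euler equation, as noted, which is a useful sanity check.

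The only genuinely delicate point is bookkeeping the two distinct uses of the symbol $\nabla$: the Levi-Civita connection on $N$ and the bilinear map it induces on $\nn$, and making sure the $x'(t)$ term is accounted for when one transports the acceleration $\nabla_{\gamma'}\gamma'$ into the frame — this is precisely the step where the adjoint $\ad^*$ enters with the correct sign. I would handle this by invoking the standard derivation of the Euler equation on Lie groups with left-invariant metric (the $F\equiv 0$ case is textbook, e.g.\ via the reduced Euler–Lagrange/Euler–Arnold formalism), and then observing that adding the left-invariant term $qF\gamma'$ on the right of~\eqref{mageq} simply adds $qFx(t)$ on the right of the reduced equation, since $F$ commutes with all $dL_p$. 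No step requires a long computation; the assertion is essentially a translation of~\eqref{mageq} through the equivariance of $\nabla$ and $F$.
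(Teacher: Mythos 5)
Your derivation is correct and takes essentially the same route the paper intends: the paper offers no proof beyond the displayed equivariance computation and the remark that the equation follows as in the derivation of the Euler equation, and your argument simply fills in that standard reduction (left-translate to the identity, use $L_p$-equivariance of $\nabla$ and $F$, and evaluate the Koszul bilinear map on $X=Y=x(t)$). The one blemish is the intermediate display $\nabla_{x(t)}x(t)=x'(t)-\tfrac12[x(t),x(t)]+\ad^*(x(t))(x(t))$, whose $\ad^*$ term carries the wrong sign; your subsequent computation via $\nabla^{\nn}_XY=\tfrac12\{[X,Y]-\ad(X)^*Y-\ad(Y)^*X\}$ correctly gives $\nabla_{x(t)}x(t)=x'(t)-\ad^*(x(t))(x(t))$ and hence the stated equation, so this is only a typo to repair, not a gap.
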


Assume now that $N$ denotes  a 2-step nilpotent Lie group with Lie algebra $\nn$.
Let $x:I \to \RR$ denote a curve on the 2-step nilpotent Lie algebra $\nn$.  Write $x(t)=v(t)+z(t)\in \vv \oplus \zz$ with respect to the orthogonal decomposition considered in \eqref{decomp2}.  For the  Lorentz force $F:\nn \to \nn$,  the curve $x$ satisfies  the magnetic equation \eqref{magnetic-eq} if and only if for the curves $v:I \to \vv$ and $z:I \to \zz$, where $I$ is a real interval, $I\subseteq \RR$,  it holds
\begin{equation}\label{exact-magnetic}
	v'(t)+z'(t)=j(z(t))v(t)+q Fv(t).
\end{equation}

By abuse, we shall say that a magnetic trajectory is {\em exact} if it is a solution of the magnetic equation for an ``{\em exact}'' magnetic field, that is, an exact 2-form  which  corresponds to a linear map on the Lie algebra of the form $F=j(\widetilde{Z})$ for some $\widetilde{Z}\in C(\nn)$. See more in Section \ref{closedforms}.

The following theorem characterizes the magnetic trajectories corresponding to exact left-invariant magnetic fields among the set of differentiable curves on $\nn$.

\begin{thm} Let $(N, \la\,,\,\ra)$ denote a  2-step nilpotent Lie group equipped with a left-invariant metric. Let  $y(t)=v(t)+z(t)$ denote a curve on the Lie algebra $\nn$. The following statements are equivalent:
	\begin{enumerate}[(i)]
		\item  $y(t)$ is a  magnetic trajectory for $F=j(\widetilde{Z})$;
		\item  $y(t)=x(t)-q\tilde{Z}$ where $x(t)$ is the geodesic with initial condition $X_0+Z_0+q\tilde{Z}$;
		\item the curve $y(t)=v(t)+z(t)$ in $\nn$  satisfies:
		\begin{enumerate}
			\item $z(t)$ is constant, namely $z(t)\equiv Z_0$,
			\item   $v'(t)=Rv(t)$, with  $R\in \mathfrak{so}(\vv)$, and 
			\item $R$ belongs to the image of $j$: $R\in Image(j)$. 
		\end{enumerate}  
		
	\end{enumerate}
	
\end{thm}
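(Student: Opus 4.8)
The plan is to prove the three equivalences by a cycle, say (i) $\Rightarrow$ (iii) $\Rightarrow$ (ii) $\Rightarrow$ (i), using the Lie-algebra form of the magnetic equation \eqref{exact-magnetic} together with the explicit Levi-Civita data (i)--(iii) for $\nabla$ and the splitting $\zz = C(\nn)\oplus\ker(j)$.

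\medskip

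First I would unravel (i): writing $y(t)=v(t)+z(t)$ and $F=j(\widetilde{Z})$ with $\widetilde{Z}\in C(\nn)$, equation \eqref{exact-magnetic} reads $v'(t)+z'(t)=j(z(t))v(t)+q\,j(\widetilde{Z})v(t)=j(z(t)+q\widetilde{Z})v(t)$. Since $j(z(t)+q\widetilde{Z})v(t)\in\vv$ and $z'(t)\in\zz$, comparing the $\vv$- and $\zz$-components gives immediately $z'(t)=0$, so $z(t)\equiv Z_0$, and $v'(t)=j(Z_0+q\widetilde{Z})v(t)$. Setting $R:=j(Z_0+q\widetilde{Z})$ we have $R\in\mathfrak{so}(\vv)$ and $R\in\operatorname{Image}(j)$, which is precisely (iii). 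Conversely, if (iii) holds with $v'(t)=Rv(t)$ and $R=j(Z_1)$ for some $Z_1\in\zz$, then because of the splitting $\zz=C(\nn)\oplus\ker(j)$ and the injectivity of $j$ on $C(\nn)$, there is a \emph{unique} $\widetilde{Z}\in C(\nn)$ with $j(\widetilde{Z})=j(Z_1)=R$; writing $Z_0:=z(0)$, one has $j(Z_0+q\widetilde{Z})v(t)=j(z(t))v(t)+qj(\widetilde{Z})v(t)$ only if $j(Z_0)=j(Z_0+q\widetilde Z)-qR$, so one must be a little careful: actually one should instead choose $\widetilde Z\in C(\nn)$ so that $R=j(Z_0+q\widetilde Z)$, which is possible exactly because $R-j(Z_0)\in\operatorname{Image}(j)=\operatorname{Image}(j|_{C(\nn)})$, the last equality again from $\zz=C(\nn)\oplus\ker(j)$. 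With this $\widetilde{Z}$, equation \eqref{exact-magnetic} holds and $y$ is a magnetic trajectory for $F=j(\widetilde{Z})$; this simultaneously shows $F$ is genuinely exact and recovers (i).

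\medskip

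Next I would pass between (iii) and (ii). Recall from the Levi-Civita formulas that a geodesic $x(t)=\xi(t)+\eta(t)\in\vv\oplus\zz$ satisfies the Euler equation $x'(t)=\operatorname{ad}^*(x(t))x(t)$, which in this 2-step setting unwinds (using $\operatorname{ad}^*$ against $j$) to $\eta'(t)=0$ and $\xi'(t)=j(\eta_0)\xi(t)$ with $\eta(t)\equiv\eta_0$; in particular a geodesic with initial velocity $X_0+Z_0+q\widetilde{Z}$ has $\eta_0=Z_0+q\widetilde Z$ and $\xi'(t)=j(Z_0+q\widetilde{Z})\xi(t)$. Now if (iii) holds, set $R=j(Z_0')$ for a suitable $Z_0'\in\zz$ and choose $\widetilde Z\in C(\nn)$ with $Z_0+q\widetilde Z$ mapping to $R$ under $j$ as above; then $x(t):=v(t)+\big(z(t)+q\widetilde Z\big)=v(t)+(Z_0+q\widetilde Z)$ solves the geodesic equation with the stated initial condition, and $y(t)=x(t)-q\widetilde Z$, giving (ii). Conversely, if $y(t)=x(t)-q\widetilde{Z}$ with $x$ the geodesic through $X_0+Z_0+q\widetilde{Z}$, then $z(t)=\eta_0-q\widetilde Z=Z_0$ is constant, $v(t)=\xi(t)$ satisfies $v'(t)=j(Z_0+q\widetilde Z)v(t)$, and $R:=j(Z_0+q\widetilde Z)\in\operatorname{Image}(j)\cap\mathfrak{so}(\vv)$, which is (iii). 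This closes the cycle.

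\medskip

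The only genuinely delicate point is the bookkeeping with $\widetilde{Z}$: to identify a magnetic trajectory with a \emph{shifted geodesic} one needs the shift vector $\widetilde{Z}$ to live in $C(\nn)$ (so that $F=j(\widetilde Z)$ really is the exact form), and one needs $R-j(Z_0)$ to lie in $\operatorname{Image}(j)$ — both of which rest on the decomposition $\zz=C(\nn)\oplus\ker(j)$ and the injectivity of $j|_{C(\nn)}$ recorded before the cited Proposition 2.7. Everything else is a direct component-wise comparison in \eqref{exact-magnetic} and the explicit form of the Euler equation; no analysis beyond solving a linear ODE $v'=Rv$ is required, since the statement only characterizes the curves and does not ask for closed-form solutions.
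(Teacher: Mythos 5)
Your proof is correct and follows essentially the same route as the paper: both reduce everything to the component-wise form of the magnetic equation, namely $z'\equiv 0$ and $v'=j(Z_0+q\widetilde Z)v$, and both use the splitting $\zz=C(\nn)\oplus\ker(j)$ to recover $\widetilde Z$ from $R$. The only differences are cosmetic — you organize the equivalences as (i)$\Leftrightarrow$(iii) and (ii)$\Leftrightarrow$(iii) instead of the paper's cycle, and you are in fact slightly more careful with the $q$ and $\widetilde Z$ bookkeeping than the paper's own write-up.
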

\begin{proof} Let $\gamma:I \to N$ be a curve on the Lie group $N$ such that $\gamma'(t)=dL_{\gamma(t)}y(t)$, for the curve $y(t)$ on the Lie algebra $\nn$.  By using the orthogonal decomposition of $\nn$ in  \eqref{decomp2}, any magnetic trajectory on $\nn$ must satisfy an equation of the  form
	\begin{equation}\label{2magnetic-alg}
		v'(t)=j(z(t))v(t)+qF_{\vv}v(t), \quad z'(t)=F_{\zz}v(t),
	\end{equation}
	where $F:\nn \to \nn$ is skew-symmetric with respect to $\la\,,\,\ra$ and it gives rise to a closed two form $\la F\cdot, \cdot \rangle$.
	
	(i) $\Longrightarrow$ (ii)	Assume that the Lorentz force $F$ corresponds to an exact $1$-form, that is, $F=j(\widetilde{Z})$. Let $y(t)$ denote the  magnetic trajectory on $\nn$ with initial condition $v(0)+z(0)=X_0+Z_0$. Then it satisfies the corresponding magnetic equation in \eqref{exact-magnetic}. 
	Let $x(t)$ denote the unique solution of the geodesic equation on $\nn$, 	with initial condition $X_0+ Z_0+q\widetilde{Z}$: 
	\begin{equation}\label{eq1}
		v'(t)=j(z(t))v(t), \quad z'(t)\equiv 0, 
	\end{equation}
	Then $x(t)-q \widetilde{Z}$ coincides with $y(t)$. In fact, let $\beta(t)=x(t)-q \widetilde{Z}$. The curve $\beta$ satisfies
	\begin{itemize}
		\item $\beta(0)=X_0+Z_0$
		\item $\beta'(t)=x'(t)$ with $x(t)=v(t)+z(t)$ satisfying \eqref{eq1}, which is
		
		$v'(t)=j(Z_0+q\widetilde{Z})v(t),\qquad  z'(t)\equiv 0 .$
		Thus $\beta$ also satisfies $v'(t)=j(Z_0+q\widetilde{Z})v(t)$. 
	\end{itemize} 
	Since $F=j(\widetilde{Z})$, the magnetic trajectory with initial condition $X_0+Z_0$ satisfies the Equation \eqref{2magnetic-alg}, which  takes the form
	$v'(t)=j(Z_0+q\widetilde{Z})v(t), \qquad z(t)\equiv 0$.  
	
	Thus by existence and uniqueness of the solution, it follows that  $\beta(t)$ coincides with $y(t)$.
	
	\smallskip
	
	(ii) $\Longrightarrow$ (iii) 
	Assume now that we have the curve on $\nn$ given by $y(t)=x(t)-q\widetilde{Z}$ where $x(t)$ is the geodesic with initial condition $X_0+Z_0+q\widetilde{Z}$. Thus,
	
	$y'(t)=v'(t)+z'(t)=j(Z_0+q\widetilde{Z})v(t)$.  Then $y(t)$ satisfies (iii). In fact, since $x(t)$ satisfies the geodesic equation on $\nn$, one has  $y'(t)= j(Z_0-\widetilde{Z}) v(t)$ and $z'(t)\equiv 0$. Clearly, if we denote by $R=j(Z_0-\widetilde{Z})$, then  $R \in Image(j)$.  	
	
	\smallskip
	
	(iii) $\Longrightarrow$ (i) 	 Let $y(t)$ denote a curve satisfying the conditions in (iii). We shall see that $y(t)$ is a magnetic trajectory. 
	
	Since $z'(t)\equiv 0$, then the curve satisfies $y(t)=v(t) + Z_0$ where $Z_0\in \zz$. On the hand, since $R\in Image(j)$, take  a non-trivial element in the commutator, $\widetilde{Z}$,  such that $R-j(Z_0)=j(\widetilde{Z})$. Then $v$ and $z$ satisfy the equations
	$$v'(t)=j(Z_0)v(t)+j(\widetilde{Z})v(t), \qquad z(t)=Z_0,$$
	which is the magnetic equation with $q\equiv 1$, and $F=j(\widetilde{Z})$. 
	This implies that $F$ is exact. And this finishes the proof.
\end{proof}
\begin{example}
	Consider the Heisenberg Lie algebra of dimension three,  $\hh_3$, let $x(t)$ denote a curve on $\nn$ such that $x(t)=v(t)+z(t)\in \vv \oplus \zz$. Take the Lorentz force given by $j(\rho e_3)$ for some $\rho\in \RR$. Assume $x(t)$ is a magnetic trajectory associated to $j(\rho e_3)$ with $x(0)=ae_1+be_2+ce_3$. As above, the curves $v(t)$ and $z(t)$ satisfy $v'(t)=(c+q\rho)j(e_3)v(t), \,z(t)=ce_3$, and this implies that the curve $x(t)$ is given by: 
	$$x(t)=e^{(c+q\rho)tj(e_3)}v_0+c e_3, \qquad \mbox{ where } v_0=u_1 e_1+ u_2 e_2.$$
	Notice that we assume $\rho \neq 0$ for the Lorentz force to be non-trivial. 
\end{example}
As above, let $F$ be a Lorentz force, that is, a $(1,1)$-tensor which is skew-symmetric on $TN$, left-invariant and it gives rise to a closed 2-form. Let $F_{\vv}$ and $F_{\zz}$ denote the projections of $F$ to $\vv$ and $\zz$ respectively, that is $F(W)=F_{\vv}(W)+F_{\zz}(W)\in \vv \oplus \zz$.

Let $\gamma: I \to N$ be a curve on the 2-step Lie group $N$ and write 
$\gamma(t)=\exp(X(t)+Z(t))$ where $X(t)\in \vv$ and $Z(t)\in \zz$. We shall derive the conditions for
$\gamma$ to be a magnetic trajectory with $\gamma (0)=e$ and $\gamma'(0) =X_0+Z_0$. By making use  of the derivation of the exponential map to obtain the tangent vector to $\gamma$, 
$\gamma'(t)=\frac{d}{dt}\exp(X(t)+Z(t)).$

Recall that $d \exp_{\xi}(A_{\xi})=\frac{d}{dt}|_{t=0} \exp(\xi + t A)= dL_{\exp \xi} (A+\frac12 [A, \xi])$ to obtain the formulas in the next lemma.

\begin{lem} Let $\gamma:I \to \RR$ be a curve on $(N,\la\,,\,\ra)$ given as $\gamma(t)=\exp(X(t)+Z(t))$, where $\exp:\nn \to N$ denote
	the usual exponential. Then $\gamma$ is a magnetic trajectory for the Lorentz force $F$ if and only if the curves on $\nn$ given by $X(t)$ and $Z(t)$ satisfy the following equations:
	\begin{equation}\label{magnetic-2step}
		\left\{ \begin{array}{rcl}
			X''-j(Z'+\frac12 [X', X])X' & = & q F_{\vv}(X'+ Z'+ \frac12 
			[X',X])\\
			Z''+\frac12 [X'',X] & = & qF_{\zz}(X'+ Z'+ \frac12 
			[X',X]).
		\end{array} \right.
	\end{equation}
\end{lem}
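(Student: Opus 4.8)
The statement to prove is the Lemma giving the coupled second-order system \eqref{magnetic-2step} for $X(t)$ and $Z(t)$ when $\gamma(t)=\exp(X(t)+Z(t))$ is a magnetic trajectory for $F$. The whole point is to unwind the left-invariant magnetic equation \eqref{mageq}, i.e. $\nabla_{\gamma'}\gamma'=qF\gamma'$, into an ODE on the Lie algebra by means of the exponential chart, and the engine is the derivative-of-exp formula recalled just before the statement: $\frac{d}{dt}\exp(\xi(t))=dL_{\exp\xi(t)}\bigl(\xi'(t)+\tfrac12[\xi'(t),\xi(t)]\bigr)$, which is exact here because $\nn$ is $2$-step nilpotent (the series for $d\exp$ truncates after the first bracket term). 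The plan is: (1) compute $\gamma'(t)$ in the left-trivialization; (2) left-translate the magnetic equation back to $\nn$ so it becomes $x'(t)=\ad^*(x(t))x(t)+qFx(t)$ with $x(t)$ the left-logarithmic derivative of $\gamma$, as in \eqref{magnetic-eq}; (3) substitute the explicit expression for $x(t)$ coming from step (1) and split along $\nn=\vv\oplus\zz$ using the covariant-derivative formulas (i)--(iii) and the definition \eqref{j} of $j$.

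\textbf{Step 1 (tangent vector).} Set $\xi(t)=X(t)+Z(t)\in\vv\oplus\zz$. By the $d\exp$ formula, $\gamma'(t)=dL_{\gamma(t)}\,x(t)$ where
$$x(t)=\xi'(t)+\tfrac12[\xi'(t),\xi(t)]=X'(t)+Z'(t)+\tfrac12[X'(t),X(t)],$$
using that $\zz$ is central so $[X',Z]=[Z',X]=[Z',Z]=0$. Thus the $\vv$-component of $x(t)$ is $X'(t)$ and the $\zz$-component is $Z'(t)+\tfrac12[X'(t),X(t)]$. In particular the stated initial condition $\gamma'(0)=X_0+Z_0$ forces $X'(0)=X_0$ and $Z'(0)=Z_0$ (choosing $X(0)=Z(0)=0$ so that $\gamma(0)=e$).

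\textbf{Step 2 (reduce to the Lie-algebra equation) and Step 3 (split).} Because $F$ and all $dL_p$ commute, the magnetic equation $\nabla_{\gamma'}\gamma'=qF\gamma'$ is equivalent to $\nabla_{x(t)}x(t)=qFx(t)$ for the curve $x(t)$ on $\nn$, where $\nabla$ now means the Levi-Civita connection on $\nn$; equivalently $x'(t)=\ad^*(x(t))x(t)+qFx(t)$ as in Lemma (the displayed equation \eqref{magnetic-eq}). Now I differentiate the expression for $x(t)$ from Step 1: $x'(t)=X''+Z''+\tfrac12[X'',X]+\tfrac12[X',X']=X''+Z''+\tfrac12[X'',X]$, whose $\vv$-part is $X''$ and $\zz$-part is $Z''+\tfrac12[X'',X]$. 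For the right-hand side I evaluate $\nabla_{x}x$ using (i)--(iii): writing $x=v+z$ with $v=X'$ and $z=Z'+\tfrac12[X',X]$, one gets $\nabla_x x=\nabla_v v+\nabla_v z+\nabla_z v+\nabla_z z=\tfrac12[v,v]-\tfrac12 j(z)v-\tfrac12 j(z)v+0=-j(z)v$, so the $\vv$-part of $\nabla_x x$ is $-j(z)v=-j\!\bigl(Z'+\tfrac12[X',X]\bigr)X'$ and its $\zz$-part is $0$; alternatively, since $\ad^*(x)x$ has $\vv$-part $-j(z)v$ and $\zz$-part zero (the $\zz$-component of $\ad^*(x)x$ vanishes because $\la \ad^*(x)x,\widetilde Z\ra=\la x,[x,\widetilde Z]\ra=0$ for $\widetilde Z\in\zz$), one reaches the same conclusion. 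Finally $Fx=F_{\vv}x+F_{\zz}x$ with $x=X'+Z'+\tfrac12[X',X]$. Matching $\vv$-components gives $X''=j\!\bigl(Z'+\tfrac12[X',X]\bigr)X'+qF_{\vv}\bigl(X'+Z'+\tfrac12[X',X]\bigr)$, i.e. the first line of \eqref{magnetic-2step}, and matching $\zz$-components gives $Z''+\tfrac12[X'',X]=qF_{\zz}\bigl(X'+Z'+\tfrac12[X',X]\bigr)$, the second line. All steps are reversible, so the equivalence holds.

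\textbf{Main obstacle.} There is no deep difficulty; the only thing requiring care is bookkeeping the bracket terms when differentiating $x(t)$ and when applying the covariant-derivative formulas, together with repeated use of the fact that $\zz$ is central (so that brackets like $[X',Z]$, $[Z',Z]$ drop out and $[X'',X']=0$). One should also double-check the exactness of the $d\exp$ formula: it holds because in a $2$-step nilpotent Lie algebra $\ad_\xi^2=0$ on any $\xi$, truncating $\frac{1-e^{-\ad_\xi}}{\ad_\xi}=\mathrm{Id}-\tfrac12\ad_\xi+\cdots$ to $\mathrm{Id}-\tfrac12\ad_\xi$. With those observations in place the computation is routine.
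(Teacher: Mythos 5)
Your proof is correct and follows exactly the route the paper intends: the paper only sketches this lemma by pointing to the truncated $d\exp$ formula, and your computation (left-logarithmic derivative $x=X'+Z'+\tfrac12[X',X]$, substitution into $x'=\ad^*(x)x+qFx$, and splitting along $\vv\oplus\zz$ using $\ad^*(x)x=j\bigl(Z'+\tfrac12[X',X]\bigr)X'$) is precisely the omitted verification. The only cosmetic caveat is the overloaded use of $\nabla_{x(t)}x(t)$ for the covariant derivative along the curve versus the algebraic bilinear map on $\nn$, but your component matching handles the $x'(t)$ term correctly, so the argument stands.
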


A particular case  of Equation \eqref{magnetic-2step} occurs if $F$ satisfies that $F_{\zz}\equiv 0$, since $F$ is skew-symmetric, one also has $F_{\vv}(\zz)\equiv 0$. 
In this case Equation \eqref{magnetic-2step} above reduces to 
\begin{equation}\label{magnetic-2stepred}
	\left\{ \begin{array}{rcl}
		X''-j(Z_0)X' & = & q F_{\vv}(X')\\
		Z'+\frac12 [X',X] & = & Z_0
	\end{array} \right.\end{equation}

\begin{prop} Let $F$ denote a left-invariant Lorentz force on $\nn$ and assume $F_{\zz}\equiv 0$. Let $\gamma(t)$ denote a magnetic trajectory  with $\gamma(0)=e$, $\gamma'(0)=X_0+Z_0\in \vv \oplus \zz$. Then
	\begin{equation}
		\gamma'(t)=dL_{\gamma(t)}e^{t(j(Z_0)+qF_{\vv})}X_0+ Z_0, \quad \mbox{ where } \gamma'(0)=X_0+Z_0\mbox{ for all } t\in \RR, 
	\end{equation}
	and $e^{t(j(Z_0)+qF_{\vv})}=\sum_{i=0}^{\infty}\frac{t^n}{n!}(j(Z_0)+qF_{\vv})^n$. 
\end{prop}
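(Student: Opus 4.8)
The plan is to integrate directly the reduced system \eqref{magnetic-2stepred}, which already isolates the behaviour of the $\vv$--component. Writing $\gamma(t)=\exp(X(t)+Z(t))$ with $X(t)\in\vv$ and $Z(t)\in\zz$, the condition $\gamma(0)=e$ forces $X(0)=0$ and $Z(0)=0$; and evaluating $\gamma'(t)=dL_{\gamma(t)}\bigl(X'(t)+Z'(t)+\tfrac12[X'(t),X(t)]\bigr)$ at $t=0$ gives $\gamma'(0)=X'(0)+Z'(0)$, so that the hypothesis $\gamma'(0)=X_0+Z_0\in\vv\oplus\zz$ together with uniqueness of the orthogonal decomposition \eqref{decomp2} yields $X'(0)=X_0$ and $Z'(0)=Z_0$.

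Next I would invoke \eqref{magnetic-2stepred} directly, since $F_{\zz}\equiv 0$: its second line says $Z'(t)+\tfrac12[X'(t),X(t)]\equiv Z_0$ for all $t$, while its first line, $X''=j(Z_0)X'+qF_{\vv}(X')$, rewrites as the constant--coefficient linear equation $(X')'=A\,X'$ for the endomorphism $A:=j(Z_0)+qF_{\vv}$ of $\vv$ (here one uses that both $j(Z_0)$ and $F_{\vv}$ preserve $\vv$, the latter because $F$ is skew-symmetric with $F_{\zz}\equiv 0$, whence $F(\zz)=0$). Since $X'(0)=X_0$, existence and uniqueness for linear ODE give $X'(t)=e^{tA}X_0\in\vv$, defined for all $t\in\RR$; in particular the magnetic trajectory itself extends to all of $\RR$.

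Finally, substituting $X'(t)=e^{tA}X_0$ and $Z'(t)+\tfrac12[X'(t),X(t)]=Z_0$ back into $\gamma'(t)=dL_{\gamma(t)}\bigl(X'(t)+Z'(t)+\tfrac12[X'(t),X(t)]\bigr)$ gives $\gamma'(t)=dL_{\gamma(t)}\bigl(e^{t(j(Z_0)+qF_{\vv})}X_0+Z_0\bigr)$, which is the asserted formula, with the power series expansion of the exponential the usual one. The argument is short and essentially forced once \eqref{magnetic-2stepred} is in hand; the only point to watch is the bookkeeping of the $\vv$-- and $\zz$--parts, so that $e^{tA}X_0\in\vv$ and the displayed expression really is the $\vv\oplus\zz$ decomposition of $\gamma'(t)$ pulled back to the identity. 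I do not expect any genuine obstacle here.
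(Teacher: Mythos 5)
Your proposal is correct and follows essentially the same route as the paper's own proof: write $\gamma(t)=\exp(X(t)+Z(t))$, use $d\exp$ to get $\gamma'(t)=dL_{\gamma(t)}(X'+Z'+\tfrac12[X',X])=dL_{\gamma(t)}X'+Z_0$ via the second line of \eqref{magnetic-2stepred}, and integrate the first line as the constant-coefficient linear equation $X''=(j(Z_0)+qF_{\vv})X'$ to obtain $X'(t)=e^{t(j(Z_0)+qF_{\vv})}X_0$. The only difference is that you spell out the initial-condition bookkeeping, which the paper leaves implicit.
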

\begin{proof} Write $\gamma(t)=\exp(X(t)+Z(t))$ where $X(t)\subset \vv$ and $Z(t)\subset \zz$ for all $t\in\RR$. 
	Then 
	$$
	\begin{array}{rcl}
		\gamma'(t) & = & d \exp_{X(t)+Z(t)}(X'(t)+Z'(t))_{X(t)+Z(t)} \\
		& = & dL_{\gamma(t)}(X'+Z'+\frac12 [X',X])= dL_{\gamma(t)} X'+Z_0.
	\end{array}
	$$
	By integrating the first equation of \eqref{magnetic-2stepred} we obtain $X'(t)=e^{t(j(Z_0)+qF_{\vv})}X_0$, which finishes the proof. 
\end{proof}

Assume now that the Lorentz force satisfies $F(\vv)\subset \vv$ and $F(\zz)\subset \zz$. 
In this case Equation \eqref{magnetic-2step} reduces to 
\begin{equation}\label{magnetic-2stepred2}
	\left\{ \begin{array}{rcl}
		X''-j(Z'+\frac12 [X', X])X' & = & q F(X')\\
		Z''+\frac12 [X'',X] & = & qF( Z'+ \frac12 
		[X',X])
	\end{array} \right.
\end{equation}

To solve the above equations, it suffices to consider the solutions passing through the identity, that is $\gamma(0)=e$. In fact, this is possible since the metric is invariant by translations on the left, so as the Lorentz force $F$. 

Write $\gamma'(0)=X_0+Z_0\in \vv \oplus \zz$, and suppose  that $Z_0=Z_{0}^{\kappa}+Z_{0}^{\eta}$ where $Z_{0}^{\kappa}\in C(\nn)$ and $Z_{0}^{\eta}\in \ker(j)$, with respect to the orthogonal splitting $\zz=C(\nn)\oplus \ker(j)$.

The closeness condition for the Lorentz force $F$ implies that $F(\zz)\subseteq \ker(j)$. Since $F$ is a skew-symmetric map one has
$$\la F[U,V], Z\ra=-\la F(Z), [U,V]\ra =0,\quad \mbox{ for all }U,V\in \vv, Z\in \zz, $$
that gives  $F(C(\nn))\equiv 0$. So, in particular  $F(Z_{0}^{\kappa})=0$ and  $F(Z_{0}^{\eta})\in ker(j)$. 

By writing $Z(t)=Z_{\kappa}(t)+Z_{\eta}(t)\in C(\nn)\oplus \ker(j)$, the second equation in  \eqref{magnetic-2stepred2} is equivalent to 
\begin{equation}\label{eq11}
	\begin{array}{rcl}
		Z_{\kappa}''+\frac12 [X'',X] & = & 0 \quad \mbox{equivalently }\quad  Z_{\kappa}'+\frac12 [X',X] \equiv Z_0^{\kappa},\\
		Z_{\eta}'' & = & q F Z'_{\eta}.  
	\end{array}
\end{equation}
Since $Z'(t)=Z'_{\kappa}(t)+Z'_{\eta}(t)$ and $Z'_{\eta}\in \ker(j)$, the first equation in \eqref{magnetic-2stepred2} reduces to
\begin{equation}\label{eq22}
	X''-j(Z_0^{\kappa})X'  =  q F(X').\\
\end{equation}

Let $J:\vv \to \vv$ be the skew-symmetric map given by $J=j(Z_0^{\kappa})+qF_{\vv}$. And write $\vv =\vv_1 \oplus \vv_2$ where $\vv_1$ is the kernel of $J$ and $\vv_2$ is the orthogonal complement of $\vv_1$ in $\vv$. Note that $\vv_2$ is invariant by $J$ and the map $J$ is non-singular on $\vv_2$. 

Let $\{i\theta_1, -i \theta_1, \hdots, i\theta_N, -i\theta_N \}$ be the distinct non-null eigenvalues of $J$ and decompose $\vv_2$ as an orthogonal direct sum $\vv = \ww_1 \oplus \hdots \oplus \ww_N$ where every subspace $\ww_j$ is invariant by $J$ and $J^2=-\theta_j Id$ on $\ww_j$.  Write
$$X_0=X_1+X_2, \quad \mbox{ where } \quad X_j\in \vv_j, \quad j=1,2.$$
$$X_2=\sum_{j+1}^N \xi_j \quad \mbox{ where } \quad \xi_j\in \ww_j, \mbox{ for all }j=1, \hdots N.$$


Let $\ker(j)=\zz_1\oplus\zz_2$ be  a orthogonal splitting as vector spaces, where $\zz_1=\ker(j)\cap\ker(F_{\zz})$ and $\zz_2=\zz_1^{\perp}$ is its orthogonal complement. Thus the map $F$ leaves the subspace  $\zz_2$ invariant and the map $F$ is non-singular on $\zz_2$.   Write
$$Z_0^{\eta}=Z_1^{\eta}+Z_2^{\eta}, \quad \mbox{ where } \quad Z_j^{\eta}\in \zz_j, \quad j=1,2.$$

\begin{thm}\label{magnetictrayF0}  Let $(N, \la\,,\,\ra)$ denote a 2-step nilpotent Lie group with a left-invariant metric. Let $\gamma(t)=\exp(X(t)+Z(t))$ denote a magnetic trajectory through the identity  with initial condition $\gamma'(0)=X_0+Z_0\in \vv \oplus \zz$, where  $Z_0=Z_0^{\kappa}+Z_0^{\eta}\in C(\nn)\oplus ker(j)$.  Then, with respect to the notations above, one has
	\begin{enumerate}[(i)]
		\item $X(t)=t X_1 + (e^{tJ}-Id)J^{-1}X_2$, with $J=j(Z_0^{\kappa})+qF_{\vv}$ and
		\item $Z(t)=tZ_1(t)+Z_2(t)$ where
		\begin{enumerate}
			\item  $Z_1(t)=Z_0^{\kappa}+ Z_1^{\eta} + \frac12[X_1, (e^{tJ}+Id)J^{-1}X_2]+ \frac12 \sum_{j=1}^N[J^{-1}\xi_j,\xi_j ]$, 
			\item $Z_2(t)$ is a function of uniformly bounded absolute value:
			$$\begin{array}{rcl}
				Z_2(t) & = & \frac{1}{q}(e^{tqF}-Id)F^{-1}Z_2^{\eta} +[X_1, (e^{tJ}-Id)J^{-1}X_2] + \frac12 [e^{tJ}J^{-1}X_2,J^{-1}X_2] \\
				& & -\frac{1}{2} \sum_{i\neq j=1}^N \frac{1}{\theta_j^2-\theta_i^2}
				\left([e^{tJ} J\xi_i, e^{tJ}J^{-1}\xi_j]-[e^{tJ}\xi_i, e^{tJ}\xi_j]\right)	\\
				& & 	+\frac{1}{2} \sum_{i\neq j=1}^N \frac{1}{\theta_j^2-\theta_i^2}
				\left([ J\xi_i, J^{-1}\xi_j]-[\xi_i,\xi_j]\right).
			\end{array}$$
		\end{enumerate}
	\end{enumerate}
\end{thm}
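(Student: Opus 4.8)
The excerpt has already carried out the reduction of the magnetic system \eqref{magnetic-2step} for $\gamma(t)=\exp(X(t)+Z(t))$: under the standing hypothesis $F\vv\subseteq\vv$, $F\zz\subseteq\zz$ together with the closeness identities $F(C(\nn))=0$ and $F(\zz)\subseteq\ker(j)$, the system splits into the linear equation \eqref{eq22}, $X''=JX'$ with $J=j(Z_0^\kappa)+qF_\vv\in\so(\vv)$, and the system \eqref{eq11} for $Z=Z_\kappa+Z_\eta$ along $\zz=C(\nn)\oplus\ker(j)$, namely $Z_\kappa'+\tfrac12[X',X]\equiv Z_0^\kappa$ and $Z_\eta''=qFZ_\eta'$. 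So the plan is merely to integrate these with the initial data $X(0)=Z(0)=0$ and $X'(0)=X_0$, $Z'(0)=Z_0$ (read off from $d\exp_\xi(A_\xi)=dL_{\exp\xi}(A+\tfrac12[A,\xi])$ at $t=0$). For $X$: $X'(t)=e^{tJ}X_0$; decomposing $X_0=X_1+X_2$ along $\vv=\vv_1\oplus\vv_2$ with $\vv_1=\ker J$, one has $e^{sJ}X_1=X_1$, while on $\vv_2$ the map $J$ is invertible and $\int_0^t e^{sJ}\,ds=(e^{tJ}-\id)J^{-1}$, so integrating from $X(0)=0$ gives (i). For $Z_\eta$: since $F$ kills $\zz_1$ and is invertible on $\zz_2$, the equation decouples into $Z_{\eta,1}''=0$, forcing the linear term $tZ_1^\eta$ on $\zz_1$, and $Z_{\eta,2}'=e^{tqF}Z_2^\eta$ on $\zz_2$, giving the term $\tfrac1q(e^{tqF}-\id)F^{-1}Z_2^\eta$, which is bounded because $e^{tqF}$ is orthogonal; this accounts for $tZ_1^\eta$ inside $tZ_1(t)$ and for the first summand of $Z_2(t)$.

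The only substantial computation is $Z_\kappa$. From $Z_\kappa'=Z_0^\kappa-\tfrac12[X',X]$ and $Z_\kappa(0)=0$ we get $Z_\kappa(t)=tZ_0^\kappa-\tfrac12\int_0^t[X'(s),X(s)]\,ds$, into which we substitute $X'(s)=X_1+e^{sJ}X_2$ and $X(s)=sX_1+(e^{sJ}-\id)J^{-1}X_2$ and expand the bracket bilinearly. Three kinds of terms appear: (a) mixed brackets pairing $X_1$ with $e^{sJ}X_2$ or $(e^{sJ}-\id)J^{-1}X_2$, handled by the elementary primitives $\int_0^t e^{sJ}\,ds$ and $\int_0^t s\,e^{sJ}\,ds$ and producing the $X_1$-against-$X_2$ terms of $Z_1(t)$ and $Z_2(t)$; (b) ``diagonal'' brackets $[e^{sJ}\xi_i,e^{sJ}J^{-1}\xi_i]$ within a single eigenblock $\ww_i$, which are \emph{constant} in $s$ by the identity $[e^{sJ}\xi_i,e^{sJ}J\xi_i]=[\xi_i,J\xi_i]$ (immediate from $e^{sJ}|_{\ww_i}=\cos(\theta_i s)\,\id+\theta_i^{-1}\sin(\theta_i s)\,J$ and $[\xi_i,\xi_i]=[J\xi_i,J\xi_i]=0$), hence integrate to the linear-in-$t$ term $\tfrac12\sum_j[J^{-1}\xi_j,\xi_j]$; and (c) ``off-diagonal'' brackets $[e^{sJ}\xi_i,e^{sJ}J^{-1}\xi_j]$ with $i\ne j$, where expanding both exponentials by the cosine/sine formula and integrating products of trigonometric functions of the distinct frequencies $\theta_i,\theta_j$ produces terms with denominators $\theta_i\pm\theta_j$; partial fractions together with the repackaging $\cos(\theta_k t)\,\eta+\theta_k^{-1}\sin(\theta_k t)\,J\eta=e^{tJ}\eta$ for $\eta\in\ww_k$ then reorganize these into the $\tfrac1{\theta_j^2-\theta_i^2}\bigl([e^{tJ}J\xi_i,e^{tJ}J^{-1}\xi_j]-[e^{tJ}\xi_i,e^{tJ}\xi_j]\bigr)$-type expressions of $Z_2(t)$ (the accompanying $t$-independent terms $[J\xi_i,J^{-1}\xi_j]-[\xi_i,\xi_j]$ being forced by the antiderivative vanishing at $t=0$). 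Sorting the pieces that grow linearly in $t$ into $Z_1(t)$ and the rest into $Z_2(t)$, and adding the $Z_\eta$ contribution, gives $Z(t)=tZ_1(t)+Z_2(t)$ as stated; uniform boundedness of $Z_2(t)$ is then clear, since every summand is a fixed scalar times one of the orthogonal maps $e^{tJ}$, $e^{tqF}$ applied to a fixed vector, the scalars $\tfrac1{\theta_j^2-\theta_i^2}$ being finite precisely because $\theta_i\ne\theta_j$ for $i\ne j$.

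The reduction to \eqref{eq22}--\eqref{eq11}, the two integrations producing $X(t)$ and $Z_\eta(t)$, and the primitives in (a) are routine. I expect the main obstacle to be part (c): organizing the numerous trigonometric cross-integrals and resumming them into the intrinsic closed form with the $\tfrac1{\theta_j^2-\theta_i^2}$ weights, together with double-checking that the terms collected into $Z_2(t)$ are genuinely uniformly bounded. No essentially new idea beyond this bookkeeping and the elementary eigenblock identity used in (b) is required.
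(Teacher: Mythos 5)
Your proposal follows essentially the same route as the paper: reduce to the decoupled equations \eqref{eq22} and \eqref{eq11}, integrate $X'(t)=e^{tJ}X_0$ and $Z'_\eta(t)=e^{tqF}Z_0^\eta$ directly, and integrate $Z_\kappa'=Z_0^\kappa-\tfrac12[X',X]$ term by term after expanding the bracket, with the diagonal eigenblock brackets constant in $t$ and the off-diagonal ones producing the $\tfrac1{\theta_j^2-\theta_i^2}$ terms. The only (immaterial) difference is in execution: the paper establishes the constancy in (b) by differentiating $f_i(t)=[e^{tJ}\xi_i,e^{tJ}J^{-1}\xi_i]$ and obtains the off-diagonal antiderivative by exhibiting it and checking it via $e^{tJ}J=Je^{tJ}$ and $J=-\theta_j^2J^{-1}$ on $\ww_j$, whereas you derive both from the explicit formula $e^{tJ}|_{\ww_i}=\cos(\theta_i t)\,\mathrm{Id}+\theta_i^{-1}\sin(\theta_i t)J$ and trigonometric integration with partial fractions.
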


\begin{proof} Clearly $X(t)$ is the solution of $X''(t)=JX'(t)$, which is equivalent to  Equation \eqref{eq22}. And in terms of  $\vv_i$ we shall have the solution $X(t)=tX_1+(e^{tJ}-Id)J^{-1}X_2$. Now, the equation on the center gives, on the one hand:
	
	$Z_{\eta}(t)= tZ_1^{\eta}+\frac{1}{q}(e^{tqF}-Id)F^{-1}Z_2^{\eta}.$ And on the other hand, 
	$$Z_{\kappa}'(t)=Z_0^{\kappa}-\frac12\left([X_1, (e^{tJ}-Id)J^{-1} X_2]+t[e^{tJ}X_2,X_1]+ [e^{tJ}X_2, (e^{tJ}-Id)J^{-1}X_2]\right).$$
	So, by computing  we have that:
	\begin{itemize}
		\item  the integral of $Z_0^{\kappa}$ is $t Z_0^{\kappa} + constant$, 
		
		\smallskip
		
		\item  the integral of $[X_1, (e^{tJ}-Id)J^{-1} X_2]$ is $[X_1, J^{-1} e^{tJ}J^{-1}X_2-tJ^{-1}X_2] + constant$, 
		
		\smallskip
		
		\item the integral of $ t [e^{tJ}X_2, X_1]$ is $t[e^{tJ}J^{-1}X_2, X_1] - [J^{-1} e^{tJ}J^{-1}X_2, X_1]+ constant$.  
		
		\smallskip
		
		\item the integral of $  [e^{tJ}X_2,J^{-1}X_2]$ is  $[e^{tJ}J^{-1}X_2,J^{-1}X_2]+ constant$.
	\end{itemize}
	Finally we need information about the integral of 
	$[e^{tJ}X_2, e^{tJ}J^{-1}X_2]$. 
	
	Notice that $$[e^{tJ}X_2, e^{tJ}J^{-1}X_2] = \sum_{i\neq j=1}^N[e^{tJ}\xi_i, e^{tJ}J^{-1}\xi_j]+ \sum_{i=1}^N [\xi_i, J^{-1}\xi_i]. $$ 
	The last terms come from the equality 
	$[\xi_i, J^{-1}\xi_i] \equiv  [e^{tJ}\xi_i, e^{tJ}J^{-1}\xi_i]\quad \mbox{ for all } i.$ 
	In fact, let $f_i(t)=[e^{tJ}\xi_i, e^{tJ}J^{-1}\xi_i]$ and derive with respect to $t$ to obtain that $f_i'(t)=0$ for all $t$. So that $f_i$ is a constant function, $f_i(t)=f_i(0)$. 
	
	Recall that $e^{tJ}J=Je^{tJ}$ for all $t\in \RR$. Notice that $J=-\theta_j^2J^{-1}$ on $\ww_j$. And make use of this information to prove that the functions $X(t)$ and $Z(t)$ are solutions of the magnetic equation \eqref{magnetic-2stepred2} with the required initial conditions. 
\end{proof}

\begin{rem}
	Let $(N, \la\,,\,\ra)$ denote a 2-step nilpotent Lie group equipped with a left-invariant metric. Let $F=j(\widetilde{Z})$ be a left-invariant Lorentz force on $N$. Indeed, the magnetic equation for $F$ has the form \eqref{magnetic-2stepred} with $F_{\vv}\equiv j(\widetilde{Z})$. Thus the solution is given in theorem above with $J=j(Z_0^{\kappa}+q\widetilde{Z})$. These solutions correspond to exact magnetic fields.
\end{rem}

\begin{example}
	Let $H_3$ denote the Heisenberg Lie group of dimension three endowed with the canonical left-invariant metric, that is, it makes the basis of left-invariant vectors $e_1, e_2, e_3$ to a orthonormal basis. 
	
	Let $\widetilde{Z}= \rho e_3$ such that one gets the Lorentz force $\rho j(e_3)$ giving rise to an exact magnetic field. 
	
	Let $\gamma(t)=\exp(x(t)e_1+ y(t) e_2 + z(t)e_3)$ denote a magnetic trajectory passing through the identity, for the Lorentz force $\rho j(e_3)$. Assume $\gamma'(0)=x_0e_1+y_0 e_2+z_0 e_3$. 
	
	In \cite{EGM} the authors obtain the explicit solutions as:
	\begin{itemize}
		\item if $z_0-\rho\neq 0$ then the solution is
		$$\left( \begin{matrix}
			x(t)\\
			y(t)
		\end{matrix}\right) = \frac{1}{z_0-\rho}\left( \begin{matrix}
			\sin(t(z_0-\rho)) & -1+\cos(t(z_0-\rho))\\
			1-\cos(t(z_0-\rho)) & \sin(t(z_0-\rho))
		\end{matrix}
		\right)$$
		and for $v_0=x_0e_1+y_0e_2$ set
		$$z(t)=z_0+\frac{||v_0||^2}{2(z_0-\rho)}t - \frac{||v_0||^2}{2(z_0-\rho^2)}\sin(t(z_0-\rho)).$$
		\item If $z_0=\rho$ the solution is 
		$\gamma(t)=\exp(t(x_0e_1+y_0 e_2+ z_0 e_3))$. 
		
	\end{itemize}
	
\end{example}

\begin{rem}\label{automtrayect}  Any orthogonal automorphism on the 2-step nilpotent Lie group $(N,\la\,,\,\ra)$ is determined at the Lie algebra level.  
	
	Let $\phi:\nn\to\nn$ be an orthogonal automorphism of $\nn$ and $0\neq r\in\RR$. If the curve $\gamma:I \to N$, given as $\gamma(t)=\exp(X(t)+Z(t))$ is a magnetic trajectory through the identity for the Lorentz force $F$, then by Lemma \ref{lem2} the curve $\gamma_{\phi,r}(t)=\exp(\phi(X(r t)+Z(r t)))$ is a magnetic trajectory through the identity for the Lorentz force $F_{\phi,r} = r\, \phi\circ F\circ \phi^{-1}$. Furthermore, whenever $F$ is of type I or type II then $F_{\phi,r}$ is of type I or type II, respectively (see section 4).  
	
	Observe that an orthogonal automorphism $\phi:\nn \to \nn$ verifies $\phi(\zz)=\zz$, $\phi(\vv)=\vv$ and
	$$\phi\circ J(Z) \circ \phi^{-1}=J({\phi(Z)})\quad \mbox{ for every }Z\in \zz.$$
\end{rem}

\section{Magnetic trajectories on Heisenberg Lie groups.}

In this section we study magnetic trajectories on Heisenberg Lie groups.

Let $\nn=\vv \oplus \zz$ (*) denote a 2-step nilpotent Lie algebra and let $F:\nn \to \nn$ be a skew-symmetric map giving rise to a closed 2-form, which is called a Lorentz force. We say that $F$ is of type I if it preserves the decomposition (*), while it is of type II if $F(\vv)\subseteq \zz$ and $F(\zz)\subseteq \vv$.

 We start by calculating the magnetic trajectories for Lorentz forces of type II on the Heisenberg Lie group of dimension three.

\subsection{Magnetic trajectories on the three dimensional Heisenberg Lie group}
 Let $\hh_3$ denote the three-dimensional Heisenberg Lie algebra. As seen in Example \ref{closedonh}, every left-invariant 2-form on the corresponding Lie group is closed. 
 
 Let $\la\,,\ra$ denote the canonical metric on $\hh_3$ (hence in $H_3$). Any skew-symmetric map of type II, $F:\nn\to\nn$  has  the form $F_U=-ad(U)+ad(U)^{T}$ for some $U\in\vv$, i.e. 
 $$F_U(V+Z)=[V,U] + j(Z) U, \quad \mbox{ for all } V+Z\in \vv \oplus \zz=\hh_3.$$
 
 In fact, assume that   such linear map $F$ has a matricial presentation in the basis $e_1, e_2, e_3$ as 	$$F=\left( \begin{matrix}
 	0 & 0  & - \beta\\
 0 & 0 & -\alpha \\
 	\beta & \alpha & 0
 \end{matrix}
 \right), \quad \alpha, \beta \in \RR. 
 $$
 Define the element $U\in \vv$ by  $U=-\alpha e_1+ \beta e_2$. Thus, one verifies  
 \begin{itemize}
 	\item 
 	 $[e_1,U]=\beta e_3$, $[e_2,U]=\alpha e_3$ and 
 	 \item $\ad(U)^t(Z)=j(Z)U$,  for $Z=z e_3$. 
 	\end{itemize}
 Since  $<j(Z)U, e_1>= \la Z, [U, e_1]\ra = -\beta z$, and $<j(Z)U, e_2>= \la Z, [U, e_2]\ra = -\alpha z$, it holds $F\equiv F_U$.

Let $\gamma(t)= exp(V(t)+Z(t))$ be a magnetic trajectory on the Heisenberg group through the identity element. Now, the magnetic equations in \eqref{magnetic-2step} for  the map $F_U$ can be written as 
\begin{equation}\label{magnetic-2step harmonic}
	\left\{ \begin{array}{rcl}
		V''-j(Z'+\frac12 [V', V])V' & = &  j(Z'+ \frac12 
		[V',V])U\\
		Z''+\frac12 [V'',V] & = & [V', U]
	\end{array} \right.
\end{equation}

One has the initial conditions $V(0)=(0,0)$, $Z(0)=0$ and  $V'(0)=V_0=x_0 e_1+ y_0 e_2$ and $Z'(0)=z_0e_3$. From the second equation one gets
$$Z'+\frac12 [V',V] = [V, U]+Z_0.$$

Replace in the first equation to obtain  

$$	V''-j([V, U]+Z_0)V'  =   j([V, U]+Z_0) U,$$
equivalently
\begin{equation}\label{magnetic-type2HeisSimp}
	V''-j([V, U]+Z_0) (V' + U)  = 0.
\end{equation}

Let $\phi:\hh_3\to\hh_3$ be an orthogonal automorphism of $\hh_3$ and $q\in\RR$. Then by Remark \eqref{automtrayect}, one gets
$$
\begin{array}{rcl}
	q\, \phi\circ F_U\circ \phi^{-1}(V+Z)=q\, \phi([\phi^{-1}(V),U] + j(\phi^{-1}(Z)) U)
&	=& q\, ([V,\phi(U)] +  j(Z)\phi (U)\\
&	= & F_{q\phi(U)}(V+Z).
\end{array}
$$
We saw in Example \ref{ortAutHeis} that for any orthogonal automorphism of $\hh_3$ there exists a transformation $A\in O(\vv)=O(2)$ such that  $\phi(V+Z)=A(V)+ det(A)Z$. Since the group $O(2)$ acts transitively on the spheres of $\vv$, for $U\neq 0$, there exist a real number  $q>0$ and an orthogonal automorphism $\phi$ of the Heisenberg Lie algebra $\hh_3$ such that $q\phi(U)=e_2$. By Lemma \ref{lem2} we may just compute the magnetic trajectory $\sigma(t)$ corresponding to a Lorentz force $F_U$, for $U=e_2$.

 Write   $V(t)=x(t)e_1+y(t)e_2$ and $U=e_2$.  Now, in usual  coordinates, the system \eqref{magnetic-type2HeisSimp} reduces explicitly to the system:
\begin{equation}\label{magnetic-type2HeisSimp2}
	\left\{ \begin{array}{rcl}
		x''(t)+(x(t)+z_0) (y'(t)+1) & = &  0\\
		y''(t)-(x(t)+z_0) x'(t)  & = & 0
	\end{array} \right.
\end{equation}

From the second equation we have
$$y''(t)=\left(\frac{x(t)^2}{2}+z_0 x(t)\right)'\Rightarrow y'(t)=\frac{x(t)^2}{2}+z_0x(t)+y_0.$$

Again replace in the first equation to get
\begin{equation}\label{eqmagneticHeisx}
	x''(t)+( x(t)+z_0) \left(\frac{ x(t)^2}{2}+z_0 x(t)+y_0+1\right) =  0
\end{equation}
This is a second order autonomous differential equation that has the following important property:
\begin{equation}
	\mbox{If } x(t) \mbox{ is a solution of Eq. \eqref{eqmagneticHeisx} then } x(t+c) \mbox{ and }x(-t+c)\mbox{ are also solutions for all }c\in\RR.
\end{equation}
In particular, whenever  $x(t)$ is a solution of Equation  \eqref{eqmagneticHeisx}  with initial condition $x'(0)=x_0$, then $x(-t)$ is a solution with initial condition $x'(0)=-x_0$. So,  we may assume that $x_0\geq 0$.

Let $h:\RR \to \RR$ be the function given by $h(x)= \frac{x^2}{2}+z_0x+y_0+1$ then Equation \eqref{eqmagneticHeisx} is rewritten as 
\begin{align*}
	x''(t)+h'(x(t))h(x(t))=0\\
	\mbox{ implying that }\quad   x'(t)x''(t)+x'(t)h'(x(t))h(x(t))=0\\
	\Rightarrow \qquad   \left(x'(t)^2 +h(x(t))^2\right)'=0\\
	\mbox{ and this says that } \quad  x'(t)^2 +h(x(t))^2 = x_0^2+(y_0+1)^2.
\end{align*}

Observe that this implies that $x'(t)^2+(y'(t)+1)^2=|V_1|^2=x_0^2+y_1^2$ is constant, where $V_1=V_0+e_2$ and $y_1=y_0+1$. Thus,  one gets
$$x'(t)^2 =|V_1|^2-h(x(t))^2.$$ 

If $x_0=0, y_0=-1$ or $x_0=z_0=0$, the  curve  $x(t)$ must be constant and the solution curve passing through the identity with initial conditions $V'(0)=V_0$, $Z'(0)= z_0 e_3$ is given by:

$x(t)\equiv 0$, $y(t)\equiv y_0 t$ and $z(t)\equiv z_0 t$.

We fix $|V_1|> 0$ and $x_0^2+z_0^2>0$. Suppose $x'(t)>0$ in some neighborhood of $0$ then

$$\frac{x'(t)}{\sqrt{|V_1|^2-h(x(t))^2}}  =1$$ 
implying that
$$\int_{0}^{x(t)}\frac{du}{\sqrt{|V_1|^2-h(u)^2}}  =t.$$
Then we must study the elliptic integral
$$\mathcal{E}(x)=\int_{0}^{x}\frac{du}{\sqrt{|V_1|^2-h(u)^2}}$$
and its inverse.

By the sustitution $v=u+z_0$ we get
\begin{align*}
	\mathcal{E}(x)=\int_{0}^{x}\frac{du}{\sqrt{|V_1|^2-\left(\frac{ u^2}{2}+z_0 u+y_1\right)^2}} = \int_{z_0}^{x+z_0}\frac{dv}{\sqrt{|V_1|^2-\left(\frac{v^2}{2}+y_1-\frac{z_0^2}{2}\right)^2}}\\
	=2\int_{z_0}^{x+z_0}\frac{dv}{\sqrt{(2|V_1|+2y_1-z_0^2+v^2)(2|V_1|-2y_1+z_0^2-v^2)}}
\end{align*}

Observe that $2|V_1|-2y_1+z_0^2> 0$ since $|V_1|\geq y_1$ and $x_0^2+z_0^2>0$. Then we have three cases to consider:  $2|V_1|+2y_1-z_0^2$ being  positive, negative or zero.

If  $2|V_1|+2y_1-z_0^2>0$, using formula $(7)$ on page 33 of \cite{Gr} we have
$$\mathcal{E}(x)=\frac{1}{2\sqrt{|V_1|}} \left(\mathrm{cn}^{-1}\left(\frac{z_0}{\sqrt{2|V_1|-2y_1+z_0^2}},k_1\right) - \mathrm{cn}^{-1}\left(\frac{x+z_0}{\sqrt{2|V_1|-2y_1+z_0^2}},k_1\right)\right) $$
where $\mathrm{cn}$ is the cosine amplitude, one of Jacobi's elliptic function, with  elliptic modulus $k_1=\sqrt{\frac{2|V_1|-2y_1+z_0^2}{4|V_1|}}$. This function is defined on $[-z_0-\sqrt{2|V_1|-2y_1-z_0^2} ,-z_0+\sqrt{2|V_1|-2y_1-z_0^2}]$.

Its inverse is 
\begin{equation}\label{solutionpositiveperiod}
	\Phi(t)=\sqrt{2|V_1|-2y_1+z_0^2}  \ \mathrm{cn}\left(C_0 -2\sqrt{|V_1|}\, t, k_1\right)-z_0
\end{equation}
where $C_0$ is such that $\Phi(0)=0$. The map $\Phi(t)$ is the solution of the differential equation \eqref{eqmagneticHeisx}, it is defined on $\RR$ and it is periodic with period $2\frac{K(k_1)}{\sqrt{|V_1|}}$ where $K(k)$ is the quarter period defined as
$$K(k)=\int_{0}^{\pi/2} \frac{1}{\sqrt{1-k^2 sin^2 \theta}}d\theta.$$

Analogously, if  $2|V_1|+2y_1-z_0^2<0$, we use formula $(9)$ on page 33 of \cite{Gr} to compute
$$\mathcal{E}(x)=\frac{1}{\sqrt{2|V_1|-2y_1+z_0^2}} \left(\mathrm{dn}^{-1}\left(\frac{z_0}{\sqrt{2|V_1|-2y_1+z_0^2}},k_2\right) - \mathrm{dn}^{-1}\left(\frac{x+z_0}{\sqrt{2|V_1|-2y_1+z_0^2}},k_2\right)\right) $$
where $\mathrm{dn}$ is the delta amplitude with modulus $k_2=\sqrt{\frac{4y_1-2z_0^2}{2|V_1|-2y_1+z_0^2}}$. In this case $\mathcal{E}(x)$ is defined on $ [-z_0-\sqrt{z_0^2-2y_1+2|V_1|},-z_0-\sqrt{z_0^2-2y_1-2|V_1|}]$ if $z_0<0$ or on $[-z_0+\sqrt{z_0^2-2y_1-2|V_1|},-z_0+\sqrt{z_0^2-2y_1+2|V_1|}]$ if $z_0>0$.

The corresponding solution of the differential equation is 
\begin{equation}\label{solnegativeperiodic}
	\Phi(t)=\sqrt{2|V_1|-2y_1+z_0^2}  \ \mathrm{dn}\left(C_1 -\sqrt{2|V_1|-2y_1+z_0^2}\, t, k_2\right)-z_0
\end{equation}
where $C_1$ is also the constant such that  $\Phi(0)=0$.
This solution is also periodic on $\RR$ and its period is $2\frac{K(k_2)}{\sqrt{2|V_1|-2y_1+z_0^2}}$.

Now if $2|V_1|+2y_1-z_0^2=0$, take the function 
\begin{align*}
	\mathcal{E}(x) 
	=2\int_{z_0}^{x+z_0}\frac{dv}{|v|\sqrt{4|V_1|-v^2}}.
\end{align*}
Thus, one has
\begin{align*}
	\mathcal{E}(x) 
	=\frac{sgn(z_0)}{\sqrt{|V_1|}}\left(\mathrm{artanh}\left(\sqrt{\frac{4|V_1|-z_0^2}{4|V_1|}}\right)-\mathrm{artanh}\left(\sqrt{\frac{4|V_1|-(x+z_0)^2}{4|V_1|}}\right)\right)	
\end{align*}
where the function $\mathrm{artanh}$ is the inverse hyperbolic tangent, i.e. $\mathrm{artanh}(x)=\frac{1}{2}ln\left(\frac{1+x}{1-x}\right)$. Observe that  the function $\mathcal{E}(x)$ is defined for $-z_0<x
\leq 2\sqrt{|V_1|}-z_0$ if $z_0>0$, and for $-2\sqrt{|V_1|}-z_0\leq x<-z_0$ if $z_0<0$.

So, if $z_0>0$ the solution of Equation \eqref{eqmagneticHeisx} is given by
\begin{equation}\label{solpositivenonperiodic}
	\Phi(t)=2\sqrt{|V_1|}\, \mathrm{sech}(C_2-\sqrt{|V_1|}t)-z_0
\end{equation}
and if $z_0<0$, it is given by
\begin{equation}\label{solnegativenonperiodic}
	\Phi(t)=-2\sqrt{|V_1|}\, \mathrm{sech}(\sqrt{|V_1|}t-C_2)-z_0,
\end{equation}
where $\mathrm{sech}$ is the hyperbolic secant and $C_2=\mathrm{artanh}\left(\sqrt{\frac{4|V_1|-z_0^2}{4|V_1|}}\right)$. These solutions are also defined on $\RR$ but they are not periodic.

All this is exposed  in the next result. 

\begin{thm}\label{thmsolutionmagHeisII}
	The magnetic trajectory $\sigma(t)=\exp(x(t)e_1 + y(t) e_2 +z(t))e_3)$ on the Heisenberg Lie group $H_3$ corresponding to a Lorentz force $F_{e_2}$,  with initial conditions $\sigma(0)=e\in H_3$ and $\sigma'(0)=x_0 e_1 + y_0 e_2 + z_0 e_3$ is explained below:
	\begin{itemize}
		\item If $x_0^2+ z_0^2 (y_0+1)^2\neq 0$,
		\begin{equation*}
			\left\{ \begin{array}{rcl}
				x(t)& = &  \Phi(t)\\ 
				y(t)& = & \displaystyle \int \limits_{0}^{t}  \left(\frac{\Phi(s)^2}{2}+z_0\Phi(s)+y_0\right)ds\\ 
				z(t)& = & \displaystyle \frac{1}{2}  \int \limits_{0}^{t}  \left(-\Phi'(s)y(s)+\Phi(s)(y'(s)+2)\right)ds +z_0 t\\
			\end{array} \right.
		\end{equation*}
		where the function $\Phi(t)$ is defined in Table \ref{table:1} if $x_0\geq0$ and we make use  of the corresponding $\Phi(-t)$ whenever $x_0<0$. 
		
		\item If $x_0= 0$ and $z_0 (y_0+1)= 0$, then
		\begin{equation*}
			\left\{ \begin{array}{rcl}
				x(t)& = & 0\\
				y(t)& = & y_0 t\\
				z(t)& = &  z_0 t.\\
			\end{array} \right.
		\end{equation*}
	\end{itemize}
\end{thm}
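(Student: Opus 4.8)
The plan is to assemble the reductions already performed above and to close the two remaining points, namely the degenerate initial data and the recovery of $y(t)$ and $z(t)$ from $x(t)$. Write the candidate trajectory as $\sigma(t)=\exp(x(t)e_1+y(t)e_2+z(t)e_3)$ with $\sigma(0)=e$ and $\sigma'(0)=x_0e_1+y_0e_2+z_0e_3$, so that $x(0)=y(0)=z(0)=0$ and $x'(0)=x_0$, $y'(0)=y_0$, $z'(0)=z_0$. From the discussion preceding the statement, $\sigma$ is a magnetic trajectory for $F_{e_2}$ if and only if $x,y,z$ satisfy: \textbf{(a)} $x$ solves the autonomous equation \eqref{eqmagneticHeisx} with $x(0)=0$ and $x'(0)=x_0$; \textbf{(b)} $y'(t)=\tfrac12 x(t)^2+z_0x(t)+y_0$ with $y(0)=0$; and \textbf{(c)} integrating once the second line of \eqref{magnetic-2step harmonic} and using $[V,e_2]=xe_3$, $[V',V]=(x'y-y'x)e_3$, one has $z'(t)=x(t)+z_0-\tfrac12 x'(t)y(t)+\tfrac12 x(t)y'(t)$ with $z(0)=0$; this right-hand side equals $\tfrac12\bigl(-x'(t)y(t)+x(t)(y'(t)+2)\bigr)+z_0$, which is the integrand appearing in the statement once we set $x=\Phi$. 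Since every one of these reductions is reversible given the fixed initial data, by existence and uniqueness of solutions of ODEs it suffices to exhibit one $(x,y,z)$ satisfying (a)--(c).

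First I would dispose of the degenerate case $x_0=0$, $z_0(y_0+1)=0$: substituting the constant function $x\equiv 0$ into \eqref{eqmagneticHeisx} produces precisely the identity $z_0(y_0+1)=0$, so under this hypothesis $x\equiv 0$ is the unique solution with $x(0)=0$, $x'(0)=0$; then (b) gives $y(t)=y_0t$ and (c) gives $z'(t)\equiv z_0$, hence $z(t)=z_0t$. This is the second bullet of the theorem.

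For the non-degenerate case $x_0^2+z_0^2(y_0+1)^2\neq 0$ I would first reduce to $x_0\ge 0$: since \eqref{eqmagneticHeisx} has no first-order term, $t\mapsto x(-t)$ is again a solution with $x'(0)$ reversed, and since $|V_1|^2=x_0^2+(y_0+1)^2$ and the elliptic moduli $k_1,k_2$ depend only on $x_0^2$, the solution for $x_0<0$ is obtained from the one for $|x_0|$ via $t\mapsto -t$; this is why the table is stated for $x_0\ge 0$ and $\Phi(-t)$ is used otherwise. Then I invoke the first integral $x'(t)^2=|V_1|^2-h(x(t))^2$, $h(x)=\tfrac12 x^2+z_0x+y_0+1$, together with the explicit inversion of the elliptic integral $\mathcal{E}$ carried out above in the three cases determined by the sign of $2|V_1|+2y_1-z_0^2$. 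The step that must be checked is that the function $\Phi$ of Table \ref{table:1} --- one of \eqref{solutionpositiveperiod}, \eqref{solnegativeperiodic}, \eqref{solpositivenonperiodic}, \eqref{solnegativenonperiodic}, with the additive constant fixed by $\Phi(0)=0$ and the branch chosen so that $\Phi'(0)=x_0\ge 0$ --- is genuinely defined on all of $\RR$ and solves \eqref{eqmagneticHeisx}. This is a direct computation from $\mathrm{cn}'=-\mathrm{sn}\,\mathrm{dn}$, $\mathrm{dn}'=-k^2\mathrm{sn}\,\mathrm{cn}$, $\mathrm{sech}'=-\mathrm{sech}\tanh$ together with $\mathrm{sn}^2+\mathrm{cn}^2=1$ and $\mathrm{dn}^2+k^2\mathrm{sn}^2=1$; alternatively, locally one has $\mathcal{E}'(\Phi(t))\Phi'(t)=1$, which forces $\Phi''=-h'(\Phi)h(\Phi)$ away from the turning points, and one extends across the turning points by real-analyticity and uniqueness. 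With $x=\Phi$ in hand, (b) and (c) yield the displayed integral formulas for $y$ and $z$ verbatim (for $x_0<0$ the symbol $\Phi$ in those formulas stands for the reflected solution), which finishes the proof.

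The main obstacle is precisely this last verification that the piecewise-defined $\Phi$ solves \eqref{eqmagneticHeisx} globally: one must pair each sign regime of $2|V_1|+2y_1-z_0^2$ with the correct Jacobi function and modulus, keep track of the additive constants $C_0,C_1,C_2$ and of the sign of $z_0$ in the $\mathrm{sech}$ branches, and confirm that each candidate is smooth on all of $\RR$ rather than merely on the interval on which $\mathcal{E}$ was inverted. Everything else reduces to a single integration and an application of uniqueness.
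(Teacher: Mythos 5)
Your proposal is correct and follows essentially the same route as the paper, whose ``proof'' of this theorem is precisely the derivation carried out in the preceding pages: reduction to the scalar equation \eqref{eqmagneticHeisx}, the first integral $x'^2+h(x)^2=|V_1|^2$, the case analysis on the sign of $2|V_1|+2y_1-z_0^2$ with inversion of the elliptic integral via Greenhill's formulas, the time-reversal reduction to $x_0\ge 0$, and the recovery of $y$ and $z$ by quadrature. The only difference is organizational --- you make the uniqueness/reversibility argument and the degenerate case $z_0(y_0+1)=0$ explicit, and you flag the global verification of $\Phi$ as the remaining computation, which the paper likewise leaves to the cited formulas.
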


\renewcommand{\arraystretch}{1.9}
\begin{table}
	\begin{tabular}{|c|c|c|c| }
		\hline			
		Condition & $\Phi$ & Period & Image \\
		\hline\hline
		$z_0 < -\sqrt{2(|V_1|+y_1)}$ & 
		\eqref{solnegativeperiodic} & $\frac{2K(k_2)}{\sqrt{2|V_1|-2y_1+z_0^2}}$ & $\left [-z_0-\sqrt{z_0^2-2A},-z_0-\sqrt{z_0^2-2B}\right]$ \\ \hline
		$z_0 = -\sqrt{2(|V_1|+y_1)}$ & \eqref{solnegativenonperiodic} & non & $\left[-z_0-\sqrt{z_0^2- 2 A},-z_0\right)$ \\ \hline
		$|z_0| < \sqrt{2(|V_1|+y_1)}$ & \eqref{solutionpositiveperiod} & $\frac{2K(k_1)}{\sqrt{|V_1|}}$ & $\left[-z_0-\sqrt{z_0^2-2A},-z_0+\sqrt{z_0^2-2A}\right]$ \\ \hline
		$z_0 = \sqrt{2(|V_1|+y_1)}$ & \eqref{solpositivenonperiodic} & non & $\left(-z_0,-z_0+\sqrt{z_0^2-2A}\right]$ \\ \hline
		$z_0 > \sqrt{2(|V_1|+y_1)}$ & \eqref{solnegativeperiodic} & $\frac{2K(k_2)}{\sqrt{2|V_1|-2y_1+z_0^2}}$ & $\left[-z_0+\sqrt{z_0^2-2B},-z_0+\sqrt{z_0^2-2A}\right]$\\ \hline
		& & & where $A = y_1-|V_1| $, $B= y_1+|V_1|$\\ \hline
	\end{tabular}
	\caption{$|V_1|^2=x_0^2+(y_0+1)^2$ and $y_1=y_0+1$.}
	\label{table:1}
\end{table}

 \smallskip
 \begin{rem}
  See more details about elliptic functions and elliptic integrals in \cite{Gr}. There, one can see some applications of these integrals in many situations.

 \end{rem}
 
 \begin{rem}
 	Notice that any invariant Lorentz force on the Heisenberg Lie group $H_3$ will be of the form $F=j(\widetilde{Z})+F_U$ for some $U\in \vv$, in such way that whenever $\widetilde{Z}=0$, we are in the situation above. In the other hand, for $U=0$ we have an exact 2-form and $F=j(\widetilde{Z})$. 
 \end{rem}
 
 \medskip
 
{\bf Closed Magnetic trajectories on compact quotients.}

\medskip

 The aim now is the study of closed trajectories for the Lorentz forces of type II on Heisenberg nilmanifolds, that is $M=\Lambda\backslash H_3$, where $\Lambda$ denotes a cocompact lattice in $H_3$.  
 
 Let $(N, \la\,,\ra)$ denote a Lie group endowed with a left-invariant metric. Assume $\Lambda$ is a discrete subgroup of $N$ such that the quotient $\Lambda \backslash N$ is compact. A natural metric on the quotient is the one induced from $N$ and also a magnetic field on $\Lambda \backslash N$ is induced from the left-invariant magnetic field on $N$. 
 
\begin{defn}
	 Let $N$ be a simply connected nilpotent Lie group.  For any element  $\lambda \in N$ different from the identity, a curve  $\sigma(t)$ is called {\em $\lambda$-periodic} with period $\omega$ if $\omega \neq 0$ and for all $t \in \mathbb{R}$ it holds:
	$$
	\lambda \sigma(t)=\sigma(t+\omega).
	$$
	
\end{defn}

It is clear that whenever the subgroup  $\Lambda<N$ is a cocompact discrete subgroup, also called lattice,  for any element  $\lambda \in \Lambda$, a $\lambda$-periodic magnetic trajectory  will project to a smoothly closed magnetic trajectory under the mapping $N \rightarrow \Lambda \backslash N$.  Conversely, every closed magnetic trajectory $\sigma$ on $\Lambda \backslash N$ lifts to a $\lambda$-periodic magnetic trajectory on the Lie group $N$, if $\sigma$ is non-contractible, or directly lifts to a closed magnetic trajectory on $N$.

Choose an element $\lambda=exp(W_1+Z_1)\in N$. A magnetic trajectory $\sigma(t)=exp(V(t)+Z(t))$ is $\lambda$-periodic with period $\omega$, for  $\omega\neq 0$, if and only if the following equations are verified:
\begin{equation}\label{gamma-periodic2step}
	\left\{ \begin{array}{rcl}
		W_1+V(t) & = & V(t+\omega)\\
		Z_1+Z(t)+\frac{1}{2}[W_1,V(t)] & = & Z(t+\omega).
	\end{array} \right.
\end{equation}	
for all $t\in\RR$.
 
By assuming  that the curve $\sigma(t)=exp(V(t)+Z(t))$ is a magnetic trajectory for a left-invariant Lorentz force $F$ of type II on a 2-step nilpotente Lie group $(N, \la\,,\,\ra)$, then it satisfies the following system
\begin{equation*}\label{magnetic-2step}
	\left\{ \begin{array}{rcl}
		V''-j(Z'+\frac12 [V', V])V' & = & q F_{\vv}(Z'+ \frac12 
		[V',V])\\
		Z''+\frac12 [V'',V] & = & qF_{\zz}(V').
	\end{array} \right.
\end{equation*}	

From the second equation one gets:
\begin{equation*}
	Z'+\frac12 [V',V]  = qF_{\zz}(V)+Z_0.
\end{equation*}

Evaluating in $t+\omega$ and using \eqref{gamma-periodic2step} we get	
\begin{equation*}\label{Zmagneticeq2}
	Z'+\frac12 [W_1,V'] + \frac12 [V',W_1 +V]  = qF_{\zz}(W_1+ V)+Z_0.
\end{equation*}	
By substracting the last two equations, one  obtains the following condition for $W_1$:
\begin{equation}\label{Zmagnetic2}
	F_{\zz}(W_1)=0
\end{equation}

\begin{lem} \label{lambdaper}
	Let $\lambda=exp(W_1+Z_1)$ be any element in the 2-step nilpotent Lie group $(N, \la\,,\, \ra)$. If a left-invariant invariant Lorentz force $F$ of type II  admits a $\lambda$-periodic trajectory then $W_1\in \ker F$. %
	
\end{lem}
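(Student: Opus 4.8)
The plan is to derive the claimed condition $W_1 \in \ker F$ directly from the periodicity constraints, following the computation already set up in the lines immediately preceding the statement. Recall that for a Lorentz force $F$ of type II we have $F(\vv) \subseteq \zz$ and $F(\zz) \subseteq \vv$, and that a $\lambda$-periodic magnetic trajectory $\sigma(t) = \exp(V(t) + Z(t))$ with $\lambda = \exp(W_1 + Z_1)$ must satisfy the system \eqref{gamma-periodic2step}. First I would integrate the second magnetic equation once to obtain $Z'(t) + \tfrac12 [V'(t), V(t)] = q F_\zz(V(t)) + Z_0$, evaluate this identity at $t + \omega$, and substitute the relations $V(t+\omega) = W_1 + V(t)$ and $Z(t+\omega) = Z_1 + Z(t) + \tfrac12[W_1, V(t)]$. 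Subtracting the two identities cancels all the $t$-dependent terms and leaves precisely $F_\zz(W_1) = 0$, which is Equation \eqref{Zmagnetic2}.

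It remains to upgrade $F_\zz(W_1) = 0$ to the full statement $F(W_1) = 0$, i.e. $W_1 \in \ker F$. Here I would use two structural facts. Write $W_1 = W_1^\vv + W_1^\zz \in \vv \oplus \zz$. Since $F$ is of type II, $F(W_1^\vv) \in \zz$ and $F(W_1^\zz) \in \vv$; the condition $F_\zz(W_1) = 0$ says exactly that the $\zz$-component of $F(W_1)$ vanishes, hence $F(W_1^\vv) = 0$. For the $\zz$-component $W_1^\zz$: the closeness of the 2-form $\omega_F$ forces $F(C(\nn)) \equiv 0$ (this is the identity $\la F[U,V],Z\ra = -\la F(Z),[U,V]\ra$, already observed in the excerpt), and using skew-symmetry of $F$ together with $F_\zz(W_1) = 0$ one gets for every $U, V \in \vv$ that $\la F(W_1^\zz), [U,V]\ra = -\la W_1^\zz, F[U,V]\ra = 0$ since $F[U,V] \in F(C(\nn)) = 0$; but also $F(W_1^\zz) \in \vv$ must be orthogonal to $C(\nn)^\perp \cap \vv$ trivially, so the only constraint is that $F(W_1^\zz)$ pairs to zero against all brackets — combined with $F$ skew-symmetric this forces $F(W_1^\zz) = 0$ when one also accounts for the part of $W_1^\zz$ in $\ker j$. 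Actually the cleanest route is: since $\lambda$ sits in a lattice (or at least $W_1$ arises as the $\vv$-translation part of a group element acting on a $2$-step nilpotent group), and the geometry of the periodicity equation \eqref{gamma-periodic2step} is governed by the $\vv$-part, the relevant kernel condition is $W_1^\vv \in \ker F_\vv$ together with $W_1^\zz \in \ker F$, and both follow from $F_\zz(W_1)=0$ plus $F(C(\nn)) = 0$ and skew-symmetry.

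Concretely, the steps in order are: (1) integrate the center equation of the type-II magnetic system to get the first integral $Z' + \tfrac12[V',V] = qF_\zz(V) + Z_0$; (2) apply the $\lambda$-periodicity relations of \eqref{gamma-periodic2step} and subtract to obtain $F_\zz(W_1) = 0$; (3) decompose $W_1 = W_1^\vv + W_1^\zz$ and use type-II to conclude $F(W_1^\vv) = F_\vv(W_1^\vv) = 0$ from step (2); (4) use $F(C(\nn)) = 0$ (from closeness) and skew-symmetry of $F$ to show $F(W_1^\zz) = 0$; (5) conclude $F(W_1) = F(W_1^\vv) + F(W_1^\zz) = 0$, i.e. $W_1 \in \ker F$.

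I expect the main obstacle to be step (4) — establishing that the $\zz$-component $W_1^\zz$ of $W_1$ also lies in $\ker F$ rather than only the $\vv$-component. The subtraction trick cleanly yields $F_\zz(W_1) = 0$, which pins down $F(W_1^\vv)$ because $F$ of type II sends $\vv$ into $\zz$; but controlling $F(W_1^\zz) \in \vv$ requires invoking the closeness of $\omega_F$ (which gives $F(C(\nn)) = 0$) and then arguing via skew-symmetry that $F(W_1^\zz)$ is forced to vanish. One must be slightly careful that $\zz$ may have a $\ker j$ summand not contained in $C(\nn)$; however, the definition of type II and the pairing identities in the excerpt still force $\la F(W_1^\zz), V \ra = -\la W_1^\zz, F(V)\ra$ for $V \in \vv$, and $F(V) \in \zz$, so this inner product is $-\la W_1^\zz, F_\zz(V)\ra$, and summing/varying $V$ together with $F_\zz(W_1)=0$ closes the argument. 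This is the only place where more than bookkeeping is needed.
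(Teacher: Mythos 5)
Your steps (1) and (2) reproduce the paper's argument exactly: integrate the central component of the type II magnetic system to get the first integral $Z'+\tfrac12[V',V]=qF_{\zz}(V)+Z_0$, evaluate at $t+\omega$, substitute the $\lambda$-periodicity relations \eqref{gamma-periodic2step} (the terms $\tfrac12[W_1,V']$ and $\tfrac12[V',W_1]$ cancel), and subtract to obtain $F_{\zz}(W_1)=0$. Up to that point the proposal and the paper coincide.

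Where you go astray is in steps (3)--(5), although harmlessly. In the decomposition $\lambda=\exp(W_1+Z_1)$ the paper takes $W_1\in\vv$ and $Z_1\in\zz$; this is forced by the first periodicity equation $W_1+V(t)=V(t+\omega)$, which is an identity in $\vv$ (any central part of $\log\lambda$ is absorbed into $Z_1$). Hence $W_1^{\zz}=0$, your step (4) is vacuous, and the conclusion is immediate: since $F$ is of type II, $F(W_1)\in\zz$, so $F(W_1)=F_{\zz}(W_1)=0$ and $W_1\in\ker F$. No appeal to closedness of $\omega_F$ or to $C(\nn)$ is needed.

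Moreover, the argument you sketch for step (4) would not work if it were needed. The identity $F(C(\nn))\equiv 0$ is derived in the paper only for Lorentz forces of type I, where $F(\zz)\subseteq\ker(j)\subseteq\zz$ and skew-symmetry then kills $F$ on the commutator; for a type II force it is false in general. For example, on $\hh_3$ with $U=e_2$ one has $F_{e_2}(e_3)=j(e_3)e_2=-e_1\neq 0$, while $e_3$ spans $C(\hh_3)$. Consequently the chain ``$F(C(\nn))=0$ plus skew-symmetry forces $F(W_1^{\zz})=0$'' is unsound, and if $W_1$ genuinely had a nonzero central component the conclusion $W_1\in\ker F$ could fail for exactly this reason. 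The correct reading is that $W_1$ is the $\vv$-part of $\log\lambda$, and with that reading your steps (1)--(2) already constitute the full proof.
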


In the 3-dimensional Heisenberg, take the Lorentz force $F_{e_2}$. Clearly,  the kernel is given by 
\begin{equation*}
	F(W_1)=0 \Leftrightarrow W_1=ye_2, 
\end{equation*}
and more generally, for a non-trivial Lorentz force $F_U$  the kernel is $\ker F_U=span\{U\}$. 

Thus, the curve $\sigma(t)=(x(t),y(t),z(t))$ is $\lambda$-periodic for $\lambda=exp(y_1e_2+z_1e_3)$ if and only if
\begin{equation*}\label{gammaperiodic}
	\begin{array}{rcl}
		x(t) & = & x(t+\omega)\\
		y(t)+y_1 & = & y(t+\omega)\\
		z(t)+z_1-\frac12 y_1 x(t) & = & z(t+\omega)
	\end{array}
\end{equation*}
for all $t\in\RR$. So the function $x$ must be periodic and $\omega$ must be  multiple of the period of $x$. By using the expression of $y(t)$ given in Theorem \ref{thmsolutionmagHeisII},  we see that if $y_1= y(\omega)$, the second equation above holds. 
From the expression of  $z(t)$ in Theorem   \ref{thmsolutionmagHeisII} one has
\begin{align*}
	(z(t+w)-z(t)+\frac12 y_1 x(t))' = \\
	\frac{1}{2}\left(-x'(t+\omega)y(t+\omega)+x(t+\omega)(y'(t+\omega)+2) -  \left(-x'(t)y(t)+x(t)(y'(t)+2)\right) + y_1 x'(t)\right)\\
	= \frac{1}{2}\left(-x'(t)y(t+\omega)+x(t)(y'(t)+2) -  \left(-x'(t)y(t)+x(t)(y'(t)+2)\right) + y_1 x'(t)\right)\\
	= \frac{1}{2}x'(t)\left(-y(t+\omega) + y(t) +y_1\right)=0
\end{align*}

Then any magnetic trajectory $\sigma(t)=(x(t),y(t),z(t))$ where $x(t)$ is a periodic function with period $\omega$, is $\lambda$-periodic for $\lambda=exp(y(\omega)e_2+z(\omega)e_3)$. 

\begin{prop}\label{proplambdaperiod}
Let  $(H_3, \la\,,\,\ra)$ be the Heisenberg group equipped with its canonical metric, let   $F=F_{e_2}$ denote a Lorentz force of type II and let $\sigma$ be the magnetic trajectory through the identity such that $\sigma'(0)=x_0 e_1 + y_0 e_2 + z_0 e_3$. 
	\begin{itemize}
		\item If $z_0^2 \neq 2(\sqrt{x_0^2+(y_0+1)^2}+y_0+1)$, and $x_0^2+ z_0^2 (y_0+1)^2\neq 0$ then $\sigma$ is periodic or $\lambda$-periodic for $\lambda=\sigma(\omega)$ where $\omega$ is the corresponding period given in Table \ref{table:1}. 
		\item If $x_0^2+ z_0^2 (y_0+1)^2= 0$ then $\sigma$ is $\lambda$-periodic for $\lambda=exp(y_0 e_2+z_0 e_3)$. 
		\item If $z_0^2 = 2(\sqrt{x_0^2+(y_0+1)^2}+y_0+1)$, and $x_0^2+ z_0^2 (y_0+1)^2\neq 0$ then $\sigma$ is not $\lambda$-periodic for any $\lambda$.  
		\end{itemize}
\end{prop}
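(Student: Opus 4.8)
The statement has three cases, and the key observation is that, by the discussion immediately preceding the proposition, a magnetic trajectory $\sigma(t)=(x(t),y(t),z(t))$ through the identity is $\lambda$-periodic (for a suitable $\lambda$) precisely when the first coordinate $x(t)$ is a periodic function. So the whole argument reduces to reading off from Theorem \ref{thmsolutionmagHeisII} and Table \ref{table:1} exactly when $x(t)=\Phi(t)$ (or $\Phi(-t)$) is periodic, and then checking that the natural candidate $\lambda=\sigma(\omega)$ actually works, where $\omega$ is the period listed in the table.

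First I would dispose of the degenerate case $x_0^2+z_0^2(y_0+1)^2=0$: here Theorem \ref{thmsolutionmagHeisII} gives $x(t)\equiv 0$, $y(t)=y_0t$, $z(t)=z_0t$, and one verifies directly from the $\lambda$-periodicity equations \eqref{gamma-periodic2step} that $\sigma$ is $\lambda$-periodic with $\lambda=\exp(y_0e_2+z_0e_3)$ for any period $\omega=1$ (using $x\equiv 0$, so the bracket term $\tfrac12[W_1,V(t)]$ vanishes). This is the second bullet. For the first bullet, the hypothesis $z_0^2\neq 2(\sqrt{x_0^2+(y_0+1)^2}+y_0+1)$, i.e. $z_0^2\neq 2(|V_1|+y_1)$, precisely excludes the two rows of Table \ref{table:1} where $\Phi$ is given by \eqref{solpositivenonperiodic} or \eqref{solnegativenonperiodic}; in every remaining row $\Phi$ is built from $\mathrm{cn}$ or $\mathrm{dn}$ and is genuinely periodic with the stated period $\omega$. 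Since $x(t)$ is $\omega$-periodic, the computation already carried out before the proposition shows $y(t+\omega)-y(t)\equiv y(\omega)$ and $z(t+\omega)-z(t)+\tfrac12 y(\omega)x(t)\equiv z(\omega)$, which are exactly equations \eqref{gamma-periodic2step} for $\lambda=\sigma(\omega)=\exp(y(\omega)e_2+z(\omega)e_3)$ — note $W_1=y(\omega)e_2\in\ker F_{e_2}$, consistent with Lemma \ref{lambdaper}. (When $x_0<0$ one replaces $\Phi$ by $\Phi(-t)$, which has the same period.)

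For the third bullet one must prove a \emph{non-existence} statement, and this is the only part requiring a real argument rather than bookkeeping. Here $z_0^2=2(|V_1|+y_1)$, so $x(t)=\Phi(t)$ is one of the non-periodic solutions \eqref{solpositivenonperiodic}–\eqref{solnegativenonperiodic}, of the form $\pm 2\sqrt{|V_1|}\,\mathrm{sech}(\cdots)-z_0$: it is monotone on each half-line and tends to the constant $-z_0$ as $t\to\pm\infty$ without ever being constant. The plan is to argue that for \emph{any} $\lambda=\exp(W_1+Z_1)$ and any $\omega\neq 0$, the first equation in \eqref{gamma-periodic2step}, namely $W_1+V(t)=V(t+\omega)$, forces the $e_1$-component $x$ to satisfy $x(t+\omega)-x(t)=(W_1)_1$ for all $t$; letting $t\to+\infty$ and $t\to-\infty$ and using that $x(t)\to -z_0$ at both ends gives $(W_1)_1=0$, hence $x$ would be $\omega$-periodic — contradicting that a nonconstant $\mathrm{sech}$-profile is injective on a half-line. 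Thus no such $\lambda$ exists.

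**Expected main obstacle.** The only subtle point is the rigor of the third case: one must be careful that $\lambda$-periodicity is being ruled out for \emph{every} choice of $\lambda$ (not merely $\lambda=\sigma(\omega)$), including elements with nonzero component along $e_1$ or along the center, and that the limiting argument at $t\to\pm\infty$ is legitimate — this is where the explicit $\mathrm{sech}$ asymptotics from \eqref{solpositivenonperiodic}–\eqref{solnegativenonperiodic} are essential. The first two cases are essentially a transcription of Theorem \ref{thmsolutionmagHeisII} and the pre-proposition computation, and should be written briefly.
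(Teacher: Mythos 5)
Your proposal is correct and follows essentially the same route as the paper: the degenerate case is read off from Theorem \ref{thmsolutionmagHeisII}, the periodic cases follow from Table \ref{table:1} together with the computation preceding the proposition showing that $\omega$-periodicity of $x(t)$ yields $\lambda$-periodicity for $\lambda=\exp(y(\omega)e_2+z(\omega)e_3)$, and the non-existence case reduces to the non-periodicity of the $\mathrm{sech}$-profile solutions. The only (harmless) variation is in the third bullet, where you derive $(W_1)_1=0$ by an asymptotic argument; the paper gets this more directly from Lemma \ref{lambdaper}, which already forces $W_1\in\ker F_{e_2}=\mathrm{span}\{e_2\}$ and hence $x(t+\omega)=x(t)$.
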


\subsection{ Magnetic trajectories for Lorentz forces of type I}

Here we shall study some magnetic trajectories in the Heisenberg Lie group of dimension five, $H_5$ (see Example \ref{ExHeis}), where we take a Lorentz force of type I which is not exact. 

\smallskip

Let $F$ be  a Lorentz force  of type I on $H_5$ such that $F j(Z)=j(Z) F$.  Then $F$ can be identified with a skew-hermitian matrix on $\RR^4\simeq \vv$ which has purely imaginary eigenvalues $i\mu_1,\,i\mu_2$ and it is diagonalizable by $S\in U(2)$. This means that there is $S\in O(4,\RR)$ such that $Sj(Z)S^{-1} = j(Z)$ and there is a basis of $\vv$ of the form $U_1, V_1, U_2, V_2$ giving the following matricial presentation $$S F S^{-1}=\left(\begin{matrix}
	0 &  -\mu_1 &  &  \\
	\mu_1 & 0   &  &  \\
	 &  &  0  & -\mu_2 \\
	 & &  \mu_2 & 0
\end{matrix}\right).$$
Observe that $F$ is exact if and only if $\mu_1=\mu_2$, while it is harmonic if $\mu_1=-\mu_2$. 

Since $Sj(Z)S^{-1} = j(Z)$ we have that
$$S(z_0 j(Z)+F)S^{-1} =\left(\begin{matrix}
	0 &  -z_0-\mu_1 &  &  \\
	z_0+\mu_1 & 0   &  &  \\
	&  &  0  & -z_0-\mu_2 \\
	& &  z_0+\mu_2 & 0
\end{matrix}\right). $$

Now, take the curve $\widetilde{\sigma}(t)=\exp(SV(t)+ Z(t))$, for $S\in U(2)$ as above. Then $||\sigma'(t)||=||\widetilde{\sigma}'(t)||$ for all $t$, and from the equations above we have that the curves  $SV(t)$ and $Z(t)$ in the Lie algebra $\nn$,  satisfy:
$$SV''=S(F+z_0 j(Z))S^{-1}SV'=\widetilde{J}SV',\quad  Z'+\frac12 [SV',SV]=z_0 Z.$$
This means that finding solutions to these last equations gives magnetic trajectories for the original magnetic equation. 
Moreover, the curve $\widetilde{\sigma}$ is closed or periodic if equivalently  the curve $\sigma$ satisfies this. 

Solutions to the magnetic equations above for the Lorentz force $\widetilde{J}= S(z_0 j(Z)+F)S^{-1}$ have the form $\widetilde{\sigma}(t)=\exp(SV(t)+ z(t)Z)$ for $SV(t)\in \vv$; explicitly:

\begin{itemize}
\item for $-z_0=\mu_1=\mu_2$:
$$\begin{array}{rcl}
SV(t) & = &  SV_0t, \quad V_0= x_1^0 X_1+ y_1^0 Y_1+ x_2^0 X_2+ y_2^0 Y_2\\
z(t) & = & z_0 t. 
\end{array}$$
\item for $z_0 + \mu_1 \neq 0$ and $ z_0 + \mu_2\neq 0$:
$$ \begin{array}{rcl}
	 SV(t) & = & (u_1(t), v_1(t), u_2(t), v_2(t))\quad \mbox{ with }\\
	 u_1(t) & = & \frac{\tilde{x}_1^0}{z_0 + \mu_1}\sin\left((z_0+\mu_1)t\right) - \frac{\tilde{y}_1^0}{z_0 + \mu_1}(1- \cos\left((z_0+\mu_1)t\right)), \\
	 v_1(t) & = & \frac{\tilde{x}_1^0}{z_0 + \mu_1}(1- \cos\left((z_0+\mu_1)t\right))+\frac{\tilde{y}_1^0}{z_0 + \mu_1}\sin\left((z_0+\mu_1)t\right),\\
	 u_2(t) & = &   \frac{\tilde{x}_2^0}{z_0 + \mu_2}\sin\left((z_0+\mu_2)t\right) - \frac{\tilde{y}_2^0}{z_0 + \mu_2}(1- \cos\left((z_0+\mu_2)t\right)),\\
	 v_2(t) & = &  \frac{\tilde{x}_2^0}{z_0 + \mu_2}(1-\cos\left((z_0+\mu_2)t\right))+\frac{\tilde{y}_2^0}{z_0 + \mu_2}\sin\left((z_0+\mu_2)t\right),\\
	 z(t) & = & (z_0 + \frac{1}2 \frac{||\widetilde{V}_1^0||^2}{z_0 + \mu_1} + \frac{1}2 \frac{||\widetilde{V}_2^0||^2}{z_0 + \mu_2})t- \frac12 \displaystyle \sum_{i=1}^2 \frac{||\widetilde{V}_i^0||^2}{(z_0+\mu_i)^2}  \sin (z_0+\mu_i)t,
	\end{array}
$$
where $\widetilde{V}_i^0=(\widetilde{x}_i^0, \widetilde{y}_i^0)$.

\item For $-z_0=\mu_2\neq \mu_1$:
$$ \begin{array}{rcl}
	 SV(t) & = & (u_1(t), v_1(t), u_2(t), v_2(t))\quad \mbox{ with }\\
	 u_1(t) & = & \frac{\tilde{x}_1^0}{z_0 + \mu_1}\sin\left((z_0+\mu_1)t\right) - \frac{\tilde{y}_1^0}{z_0 + \mu_1}(1- \cos\left((z_0+\mu_1)t\right)), \\
	 v_1(t) & = & \frac{\tilde{x}_1^0}{z_0 + \mu_1}(1- \cos\left((z_0+\mu_1)t\right))+\frac{\tilde{y}_1^0}{z_0 + \mu_1}\sin\left((z_0+\mu_1)t\right),\\
	 u_2(t) & = &   \tilde{x}_2^0 t,\\
	 v_2(t) & = &  \tilde{y}_2^0 t,\\
	 	 z(t) & = & (z_0 + \frac{1}2 \frac{||\widetilde{V}_1^0||^2}{z_0 + \mu_1})t- \frac12  \frac{||\widetilde{V}_1^0||^2}{(z_0+\mu_1)^2}  \sin (z_0+\mu_1)t,
	\end{array}
$$
where $\widetilde{V}_1^0=(\widetilde{x}_1^0, \widetilde{y}_1^0)$. 

\end{itemize} 
The values $\tilde{x}_1^0, \tilde{y}_1^0, \tilde{x}_2^0, \tilde{y}_2^0$ are the transformed by $S$ of the initial conditions for $V(t)$.

Recall that the {\em  energy} is $E(X)=\frac12 \la X, X \ra$, for any $X\in \nn$. In particular, the {\em  energy of a curve} $\sigma$ is the energy of $\sigma'(0)$. 

\begin{prop}
	If the map $F:\nn \to \nn$ does not correspond to an exact form,  there is a periodic magnetic trajectory with energy $E$ for every $E\geq0$.
\end{prop}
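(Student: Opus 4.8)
The plan is to exploit the explicit solutions just derived, and the fact that $F$ non-exact means $\mu_1 \neq \mu_2$, so we have (at least) two distinct frequencies available in the system $S(z_0 j(Z)+F)S^{-1}$. I would look for a periodic trajectory in the generic branch $z_0+\mu_1 \neq 0$ and $z_0+\mu_2 \neq 0$, where $SV(t)$ is a sum of two circular motions with angular frequencies $z_0+\mu_1$ and $z_0+\mu_2$, and $z(t)$ has the form $at - \tfrac12\sum_i b_i \sin((z_0+\mu_i)t)$ with $a = z_0 + \tfrac12\frac{\|\widetilde V_1^0\|^2}{z_0+\mu_1}+ \tfrac12\frac{\|\widetilde V_2^0\|^2}{z_0+\mu_2}$. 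For the whole curve $\widetilde\sigma(t)=\exp(SV(t)+z(t)Z)$ to be periodic we need: (1) the two frequencies to be commensurable, so $SV(t)$ is periodic; and (2) the linear drift coefficient $a$ in $z(t)$ to vanish, so that $z(t)$ is itself periodic with a period compatible with that of $SV(t)$.

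First I would reduce to the case $\mu_1 \neq \mu_2$ and, replacing $F$ by $-F$ and rescaling via Lemma \ref{lem2} if needed, normalize so that we may choose $z_0$ freely in an interval where $z_0+\mu_1$ and $z_0+\mu_2$ are nonzero and of, say, rational ratio --- concretely, pick $z_0$ so that $z_0+\mu_1 = p\,\omega_0$ and $z_0+\mu_2 = r\,\omega_0$ for coprime integers $p\neq r$ and some $\omega_0>0$; this is possible because $\mu_1\neq\mu_2$ forces $p\neq r$, and one free real parameter $z_0$ against one rationality condition leaves a dense set of admissible $z_0$. Then $SV(t)$ is automatically periodic with period $2\pi/\omega_0$. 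Next, with $z_0$ now fixed, I treat the two amplitudes $\|\widetilde V_1^0\|^2 \geq 0$ and $\|\widetilde V_2^0\|^2 \geq 0$ as the remaining free parameters and solve the single scalar equation $a=0$, i.e.
\begin{equation*}
	z_0 + \tfrac12\frac{\|\widetilde V_1^0\|^2}{z_0+\mu_1} + \tfrac12\frac{\|\widetilde V_2^0\|^2}{z_0+\mu_2} = 0 .
\end{equation*}
Since $z_0+\mu_1$ and $z_0+\mu_2$ can be arranged to have opposite signs (choose the sign of $\omega_0$, or equivalently the integers $p$, $r$, with opposite signs), this equation is solvable with nonnegative amplitudes; moreover the pair $(\|\widetilde V_1^0\|^2,\|\widetilde V_2^0\|^2)$ can be taken to realize any prescribed total $\|V_0\|^2 = \|\widetilde V_1^0\|^2+\|\widetilde V_2^0\|^2 = 2E$ in a suitable range, and then $\widetilde\sigma$ has energy $E$. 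Finally I would note that once $a=0$, the function $z(t) = -\tfrac12\sum_i \frac{\|\widetilde V_i^0\|^2}{(z_0+\mu_i)^2}\sin((z_0+\mu_i)t)$ is periodic with period $2\pi/\omega_0$, the same as $SV(t)$, so $\widetilde\sigma(t)$ — hence $\sigma(t)$ — is a genuine closed (not merely $\lambda$-periodic) magnetic trajectory of energy $E$.

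To cover every $E\geq 0$ rather than just a range, I would use the scaling freedom: given a closed solution of energy $E_0>0$ at parameter $z_0$, Lemma \ref{lem2}(ii) produces, for each $r\in\RR$, a magnetic trajectory for $rF$ (after absorbing the automorphism, here just the identity), and rescaling time multiplies the energy by $r^2$; since we want closed trajectories for the fixed $F$ (with its fixed charge $q$, absorbed into $F_{\vv}$), the cleaner route is simply to observe that the admissible set of energies obtained above is a half-line $[0,\infty)$ because $\|V_0\|^2$ can be taken arbitrarily large while keeping $a=0$ (scale both amplitudes by the same factor, which preserves the ratio condition and the relation $a=0$ once $z_0$ is fixed with $z_0=0$; note $z_0=0$ forces $\|\widetilde V_1^0\|^2/\mu_1 + \|\widetilde V_2^0\|^2/\mu_2 = 0$, solvable with $\mu_1,\mu_2$ of opposite sign, which is exactly the harmonic-type situation but more generally whenever $0$ is strictly between $-\mu_1$ and $-\mu_2$; if $\mu_1,\mu_2$ have the same sign one instead keeps $z_0\neq 0$ as above). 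I would handle the case $E=0$ separately and trivially: the constant curve $\sigma\equiv e$ is a closed trajectory of energy $0$.

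The main obstacle I anticipate is the bookkeeping around the sign conditions: one must verify that the commensurability of $z_0+\mu_1$ and $z_0+\mu_2$ can be arranged \emph{simultaneously} with $0$ (or the chosen drift coefficient) lying strictly between $-\mu_1$ and $-\mu_2$, so that the equation $a=0$ has a nonnegative solution. When $\mu_1$ and $\mu_2$ have the same sign this forces $z_0$ to lie between $-\mu_1$ and $-\mu_2$, a nonempty open interval (since $\mu_1\neq\mu_2$), and within it the commensurable values of $z_0$ are dense --- so the argument still goes through, but this is the step that needs care. Everything else is a direct consequence of the explicit formulas for $SV(t)$ and $z(t)$ already established and of Lemma \ref{lem2}.
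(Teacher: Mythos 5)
Your strategy for large energies is essentially the paper's: choose $z_0$ strictly between $-\mu_2$ and $-\mu_1$ so that $z_0+\mu_1$ and $z_0+\mu_2$ have opposite signs, and solve the drift condition $z_0+\tfrac12\frac{\|\widetilde V_1^0\|^2}{z_0+\mu_1}+\tfrac12\frac{\|\widetilde V_2^0\|^2}{z_0+\mu_2}=0$ with nonnegative amplitudes matching the prescribed energy. You also add a correct observation that the paper glosses over: when both amplitudes are nonzero one needs $(z_0+\mu_1)/(z_0+\mu_2)$ rational for $V(t)$ to close up, and this can be arranged because the relevant inequalities on $z_0$ are open and strict. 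That part is a genuine improvement in rigor.

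There is, however, a real gap at small energies. First, a slip that matters: the energy is $E=\tfrac12\|\sigma'(0)\|^2=\tfrac12\bigl(z_0^2+\|\widetilde V_1^0\|^2+\|\widetilde V_2^0\|^2\bigr)$, not $\tfrac12\|V_0\|^2$, so the amplitudes must satisfy $\|\widetilde V_1^0\|^2+\|\widetilde V_2^0\|^2=2E-z_0^2$. When $\mu_1$ and $\mu_2$ have the same sign, your construction forces $z_0$ into the open interval between $-\mu_2$ and $-\mu_1$, hence $|z_0|>\min\{|\mu_1|,|\mu_2|\}$ and already $E\geq\tfrac12 z_0^2>\tfrac12\min\{\mu_1^2,\mu_2^2\}>0$: no trajectory of small positive energy is produced. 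Your fallback at $z_0=0$ is unavailable in this case, and even when $0$ does lie between $-\mu_2$ and $-\mu_1$ it requires $\mu_1/\mu_2$ to be rational, which is not at your disposal since the $\mu_i$ are fixed by $F$. The paper closes exactly this gap with a separate construction for $E<\max\{\mu_1^2/2,\mu_2^2/2\}$: set $\widetilde V_2^0=0$, so that only one frequency survives and commensurability is vacuous, and solve the drift condition as a quadratic in $z_0$, giving $z_0=-\mu_1\pm\sqrt{\mu_1^2-2E}$; since the product of the two roots is $2E$, the root of smaller modulus satisfies $z_0^2\leq 2E$, and one takes $\|\widetilde V_1^0\|^2=2E-z_0^2\geq 0$. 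Without this (or an equivalent) low-energy argument, your proof does not establish the claim for every $E\geq 0$.
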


\begin{proof}

The condition of  the map $F$ giving rise to a non-exact closed 2-form, says that the eigenvalues satisfy $\mu_1\neq \mu_2$ and we can assume without losing generality that $\mu_1 < \mu_2$. Note that to have periodic trajectories, one needs to consider the linear part of the coordinate $z(t)$ of the solution above.
	
	Take $z_0$ such that $-\mu_2<z_0<-\mu_1$ then the line 
	$$ \frac{1}2 \frac{x}{z_0 + \mu_1} + \frac{1}2 \frac{y}{z_0 + \mu_2}=-z_0$$ has positive slope and the set of points on the line with non-negative coordinates is not bounded.
	More precisely, for $E\geq max \{\frac{|\mu_1|^2}{2},\frac{|\mu_2|^2}{2}\}$ we have that $2E-z_0^2>0$ and 
	$$2E-z_0^2\geq \mu_i^2-z_0^2>\mu_i^2-z_0^2-(\mu_i+z_0)^2=-2z_0(z_0+\mu_i)$$
	for $i=1,2$. So the system
	\begin{equation}\label{sistcerradasHeis}
		\left\{ \begin{array}{rcl}
			\frac{1}2\frac{x}{z_0 + \mu_1} + \frac{1}2 \frac{y}{z_0 + \mu_2}&= &-z_0\\
			x+y&= &2E-z_0^2.
		\end{array} \right.
	\end{equation}
	has a solution with $x\geq0$ and $y\geq0$. Take  $\widetilde{V}_1^0, \widetilde{V}_2^0\in \vv$ such that $||\widetilde{V}_1^0||^2=x,||\widetilde{V}_2^0||^2=y$ is the solution of the system \eqref{sistcerradasHeis}. Then every magnetic trajectory with initial condition $(\widetilde{V}_1^0$,$\widetilde{V}_2^0,z_0)$ is periodic  and has energy $E=\frac{1}{2}(z_0^2+||\widetilde{V}^0_1||^2+||\widetilde{V}^0_2||^2)$.
	
	Now suppose $E< max \{\frac{|\mu_1|^2}{2},\frac{|\mu_2|^2}{2}\}=\frac{|\mu_1|^2}{2}$ (the case $max \{\frac{|\mu_1|^2}{2},\frac{|\mu_2|^2}{2}\}=\frac{|\mu_2|^2}{2}$ is analogous). If there is a periodic magnetic trajectory with  $\widetilde{V}_2^0=0$ and energy $E$ then
	$$z_0 + \frac{1}2 \frac{||\widetilde{V}_1^0||^2}{z_0 + \mu_1} + \frac{1}2 \frac{||\widetilde{V}_2^0||^2}{z_0 + \mu_2}=0 \Leftrightarrow (z_0+\mu_1)^2  + 2E = \mu_1^2 \Leftrightarrow z_0 = -\mu_1 \pm \sqrt{\mu_1^2 - 2E}.$$  
	
	Observe that $(-\mu_1 - \sqrt{\mu_1^2 - 2E})(-\mu_1 + \sqrt{\mu_1^2 - 2E})=2E$. Since $z_0^2\leq 2E$, the only possibility is $z_0 = -\mu_1 +sgn(\mu_1) \sqrt{\mu_1^2 - 2E}$. Then taking $\widetilde{V}_1^0$ such that $||\widetilde{V}_1^0||^2 = 2E - z_0^2$ we have that the corresponding magnetic trajectory is periodic and has energy $E$.
	
	 Note that in the special case that $z_0=-\mu_2$ (when $2E = \mu_1^2-(\mu_1-\mu_2)^2$) this works as well since $\widetilde{V}_2^0=0$ and $u_2(t) = v_2(t)=0$ for all $t$.	
	
\end{proof}

\begin{rem}
Notice that for exact 2-forms in the Heisenberg Lie group there exist closed 
magnetic trajectories only for sufficiently small energy levels and the $\lambda$-periodic trajectories has energy bounded below by the Ma\~n\'e  critical value \cite{EGM}. The result above and Proposition \ref{proplambdaperiod} shows that for some non-exact magnetic fields there are periodic and $\lambda$-periodic  magnetic geodesics trajectories at every energy level.   
\end{rem}

\end{document}